\documentclass{article}
\usepackage{graphicx} 

\usepackage[top=2.54cm, bottom=2.54cm, left=3.17cm, right=3.17cm]{geometry}
\usepackage{amsmath}    
\usepackage{amsfonts}    
\usepackage{amsthm} 
\usepackage{mathtools} 
\usepackage{amssymb} 
\usepackage{enumitem} 
\usepackage{tikz} 
\usepackage{subcaption} 
\usepackage{indentfirst} 
\usepackage{hyperref} 

\usepackage[dvipsnames]{xcolor}

\tikzset{every picture/.style={line width=0.75pt}}

\newtheorem{theorem}{Theorem}[section]
\newtheorem{lemma}[theorem]{Lemma}
\newtheorem{observation}[theorem]{Observation}
\newtheorem{claim}[theorem]{Claim}

\theoremstyle{remark}

\newtheorem*{remark*}{Remark}

\DeclareMathOperator*{\conv}{conv }

\title{A note on piercing discrete rectangles}
\author{
    Wei Rao\thanks{Moscow Institute of Physics and Technology, Institutsky lane 9, Dolgoprudny, Moscow region, 141700, Russia. Email address: \href{mailto:raowei1998@gmail.com}{raowei1998@gmail.com}} 
}

\date{}

\begin{document}

\maketitle

\begin{abstract}
In 2008, Halman proved a discrete Helly-type theorem for axis-parallel boxes in $\mathbb R^d$. 
Very recently, this result was extended to the $(p,q)$ setting with $p \geq q \geq d+1$ by Edwards and Sober{\'o}n, and subsequently to the case $p \geq q \geq 2$ by Gangopadhyay, Polyanskii, and the author of this paper.

In this paper, we obtain improved bounds for the $(p,q)$ problem in the case $q=2$ and $d=2$. 
More precisely, our main result asserts that for any integer $p \geq 2$, any set $P \subseteq \mathbb R^2$, and any finite family $\mathcal B$ of axis-parallel rectangles in $\mathbb R^2$ such that every rectangle contains a point of $P$, if among every $p$ rectangles there exist two whose intersection contains a point of $P$, then there exists a subset $S \subseteq P$ of size at most $O\!\bigl( (p \log \log p)^2 \bigr)$ such that every rectangle contains a point of $S$. 
Moreover, when $p=2$, the size of $S$ can be bounded by $4$.
   
\end{abstract}

\textbf{Keywords:} Helly theorem; $(p,q)$-theorem; piercing number

\section{Introduction}
The celebrated theorem of Helly~\cite{helly1923mengen} concerns the intersection patterns of families of convex sets in \(\mathbb{R}^d\).
It states that for a finite family \(\mathcal F\) of convex sets in \(\mathbb{R}^d\), if every subfamily of at most \(d+1\) sets has nonempty intersection, then all members of \(\mathcal F\) intersect.
Over the years, numerous results related to Helly’s theorem have been obtained; see the survey~\cite{barany2022helly}.

Helly’s theorem implies the following Helly-type result for \emph{axis-parallel boxes} (or simply \emph{boxes}), which are defined as Cartesian products of \(d\) closed intervals in \(\mathbb{R}\).
Given a finite family \(\mathcal B\) of boxes in \(\mathbb{R}^d\), if every two members of \(\mathcal B\) intersect, then $\bigcap \mathcal B \coloneqq \bigcap_{B \in \mathcal B} B \neq \emptyset$.

In 2008, Halman~\cite{Hal} proved the following discrete Helly-type result for boxes.
For a set \(X\) and a family \(\mathcal F\) of sets, the \emph{trace} of \(\mathcal F\) on \(X\), denoted by \(\mathcal F|_X\), is defined as $\mathcal F|_X \coloneqq \{ F \cap X : F \in \mathcal F \} $.

\begin{theorem}[Halman’s theorem; Theorem~2.10 in~\cite{Hal}]
\label{Halman theorem}
Let \(d\) be a positive integer.
Let \(P\) be a finite set in \(\mathbb{R}^d\), and let \(\mathcal B\) be a finite family of boxes in \(\mathbb{R}^d\).
If for every subfamily \(\mathcal B' \subseteq \mathcal B\) of size at most \(2d\) the trace \(\mathcal B'|_P\) is intersecting, then \(\mathcal B|_P\) is also intersecting.
\end{theorem}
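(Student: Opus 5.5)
We argue by induction on $|\mathcal B|$, with the goal of exhibiting a single point of $P$ lying in every box. We may assume $\mathcal B$ is nonempty and that every box meets $P$, which is the case $|\mathcal B'|=1$ of the hypothesis. The strategy is to replace each box $B$ by its \emph{discrete hull} $\hat B$, the smallest box containing $B\cap P$. This box is well defined because $P$ is finite and $B\cap P\neq\emptyset$, and it satisfies $\hat B\cap P=B\cap P$. The replacement leaves all traces unchanged, so we may assume every box is spanned by points of $P$. Each such box $B=\prod_i[a_i(B),b_i(B)]$ has $2d$ facets, and each facet contains a point of $P\cap B$.

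The key step uses the continuous Helly-type statement for boxes recalled above. Pairwise intersecting boxes have nonempty intersection, so $\bigcap\mathcal B$ is itself a box, namely $K=\prod_i[\max_B a_i(B),\min_B b_i(B)]$. For each of the $2d$ constraints, pick a box realizing it: $B_i^-$ attains $\max_B a_i(B)$ and $B_i^+$ attains $\min_B b_i(B)$. This gives a subfamily $\mathcal B'=\{B_1^\pm,\dots,B_d^\pm\}$ of at most $2d$ boxes with $\bigcap\mathcal B'=K$, where $K=\bigcap\mathcal B$. By hypothesis there is a point $x\in P\cap\bigcap\mathcal B'$, so $x\in P\cap K$ and $x$ lies in every box of $\mathcal B$.

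It remains to check that the hypothesis really makes $\mathcal B$ pairwise intersecting and that $K$ is well defined. Pairwise intersection holds because $2\le 2d$ and each pair shares a point of $P$. Hence $\max_B a_i(B)\le\min_B b_i(B)$ in every coordinate: if $a_i(B)>b_i(B')$ for some pair, then $B$ and $B'$ would be disjoint. So $K$ is a nonempty box and is the intersection of the $2d$ extremal boxes. In fact, once this is observed, neither the discrete-hull reduction nor the induction is needed; the extremal-facet argument handles everything directly.

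The one point requiring care, and the step I expect to be the main obstacle, is the apparent danger that $\bigcap\mathcal B$ is nonempty yet misses $P$. This is exactly where the extremal choice matters. The intersection of all of $\mathcal B$ \emph{equals} the intersection of the $2d$ chosen boxes, not merely contains it, so the point of $P$ guaranteed for $\mathcal B'$ automatically lies in $\bigcap\mathcal B$. The same reasoning shows that $2d$ is sharp: take $P$ to consist of the $2d$ points $\pm e_i$, and let the boxes be the $2d$ sets obtained from a large cube by cutting off one point each. Every $2d-1$ of these boxes share a point of $P$, but all $2d$ do not.
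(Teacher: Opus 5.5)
The paper does not prove this statement. It quotes it from Halman as background, so there is no in-paper argument to compare yours with.

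Your proof is correct and complete. The extremal subfamily $\mathcal B'=\{B_1^\pm,\dots,B_d^\pm\}$ has at most $2d$ members and satisfies $\bigcap\mathcal B'=\bigcap\mathcal B$. Hence the point of $P$ that the hypothesis supplies for $\mathcal B'$ pierces every box of $\mathcal B$.

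You can trim the write-up considerably:
\begin{itemize}
\item The announced induction and the discrete-hull reduction are never used, as you yourself note.
\item The appeal to pairwise intersection and the continuous Helly theorem is also unnecessary. The identity $\bigcap_{B\in\mathcal B}B=\prod_i\bigl[\max_B a_i(B),\min_B b_i(B)\bigr]$ holds as an equality of sets whether or not some factor interval is empty. Applying the same formula to $\mathcal B'$, whose coordinatewise extrema coincide with those of $\mathcal B$, gives $\bigcap\mathcal B'=\bigcap\mathcal B$ directly. Nonemptiness then comes for free from the hypothesis.
\item Finiteness of $P$ plays no role. Only finiteness of $\mathcal B$ is used, so that the extrema are attained.
\end{itemize}

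Your sharpness example is also correct, though it is not part of the statement. For instance, take $B_{+e_i}=\{x: x_i\le \tfrac12\}$ intersected with a large cube, and $B_{-e_i}=\{x: x_i\ge -\tfrac12\}$ intersected with the same cube.
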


Very recently, Edwards and Sober{\'o}n~\cite{edwards2025extensions} proved a \((p,q)\)-theorem for discrete boxes with \(p \ge q \ge d+1\).
Subsequently, Gangopadhyay, Polyanskii, and the author of this paper improved this result to the case \(p \ge q \ge 2\) in~\cite{gangopadhyay2025new}. 

Given a family $\mathcal F$ and integers $p\geq q$, we say it has \textit{$(p,q)$-property}, if among every $p$ members of $\mathcal F$, there are $q$ members that intersect. The \textit{piercing} (or \textit{transversal}) number of $\mathcal F$, denoted by $\tau(\mathcal F)$, is the smallest size of a set $S$ that intersects every member of $\mathcal F$. The \textit{packing} (or \textit{matching}) number of $\mathcal F$, denoted by $\nu (\mathcal F)$, is the largest size of a subfamily $\mathcal F' \subseteq \mathcal F$ such that members of $\mathcal F'$ are pairwise disjoint.

\begin{theorem}[Theorem~5 in~\cite{gangopadhyay2025new}]\label{theorem: theorem 5 in GPR25}
Let \(p,q,d\) be positive integers with \(p \ge q \ge 2\).
There exists \(N \coloneqq N(p,q,d)\) such that for any finite set \(P \subseteq \mathbb{R}^d\) and any finite family \(\mathcal B\) of boxes in \(\mathbb{R}^d\), the following holds.
If the trace \(\mathcal B|_P\) has the \((p,q)\)-property and does not contain the empty set, then $\tau(\mathcal B|_P) \le N $.
\end{theorem}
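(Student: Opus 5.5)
The plan is to reduce to the case $q=2$ and then induct on the dimension $d$, with an inner induction on the packing number, in the style of Károlyi's bound for $\tau$ in terms of $\nu$ for boxes. First, the $(p,q)$-property of $\mathcal B|_P$ with $q\ge 2$ implies the $(p,2)$-property: among any $q$ members with a common point of $P$, any two of them share a point of $P$. The $(p,2)$-property says exactly that $\nu(\mathcal B|_P)\le p-1$. So it suffices to show that $\tau(\mathcal B|_P)\le f_d(\nu(\mathcal B|_P))$ for some function $f_d$.

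Second, I normalize the family. I replace each box $B$ by the bounding box of $B\cap P$. This lies inside $B$ and contains $B\cap P$, so the trace is unchanged. After this, every box has its faces determined by coordinates of points of $P$, and the problem becomes purely combinatorial on the finite grid spanned by the coordinates of $P$. For $d=1$ the traces are intervals of the ordered set $P$. The classical greedy argument then applies: repeatedly take the interval whose right end is leftmost, pierce it at that right end, and delete all intervals it hits. This gives $\tau=\nu$ and serves as the base case.

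For the inductive step I sort $P$ by the first coordinate and choose a splitting value $c$, say a median of the first coordinates of the relevant points. This splits $\mathcal B$ into three classes: boxes whose trace lies in $\{x_1\le c\}$, boxes whose trace lies in $\{x_1>c\}$, and \emph{crossing} boxes with trace points on both sides. The left and right classes live on disjoint point sets, so their packing numbers add up to at most $\nu$. I recurse on them, aiming for the Károlyi-type recursion $f_d(\nu)\le \max_{a+b\le \nu}\bigl(f_d(a)+f_d(b)\bigr)+g_{d}(\nu)$. For a crossing box $B$, its trace contains the trace of the truncated box $B\cap\{x_1\le c\}$, whose projection to the first coordinate reaches the last column $\le c$. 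Piercing these truncated pieces on the left side therefore pierces all crossing boxes.

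The main obstacle is bounding the piercing number $g_d(\nu)$ of the crossing class. The difficulty is that, unlike in the continuous setting, crossing boxes need not contain any point of $P$ on the hyperplane $x_1=c$, so one cannot simply project to dimension $d-1$. My first attempt is to exploit the one-sided structure: all truncated pieces share the right endpoint of their $x_1$-range. Two such pieces meet in $P$ iff the shorter piece, whose $x_1$-range is contained in the other's, meets the other in $P$. I would order the pieces by the left endpoint of their $x_1$-range. I would then cut the $x_1$-axis into $O(\nu)$ consecutive blocks of point-columns via a greedy choice like the $d=1$ argument: take the piece whose left endpoint is largest and treat its column range as a block. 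Within a block, the pieces spanning that block behave like a $(d-1)$-dimensional family on the points of the block with their first coordinate forgotten. Its packing number is at most $\nu$, so the induction hypothesis $f_{d-1}$ applies. Summing over blocks should give $g_d(\nu)\le O(\nu)\,f_{d-1}(\nu)$, and the recursion then yields a finite bound $N(p,q,d)$. If this block decomposition fails to control the pieces that span several blocks, my fallback is the Alon--Kleitman route. That route uses the bounded VC-dimension of box traces (at most $2d$) for $\varepsilon$-nets. It combines this with a fractional Helly statement obtained from Halman's Theorem~\ref{Halman theorem} and a supersaturation argument on the $(p,2)$-property.
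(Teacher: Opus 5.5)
Your reduction to bounding $\tau(\mathcal B|_P)$ by a function of $\nu(\mathcal B|_P)\le p-1$ is correct. The Károlyi-type recursion, however, breaks exactly at the step you flag, and the fix you propose rests on a false claim. It is not true that the block-restricted $(d-1)$-dimensional family has packing number at most $\nu$: two crossing boxes can have disjoint traces inside the block and still share a point of $P$ outside it.

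For example, let $d=3$, $c>0$, $k\ge 3$. Put $u_i=(c,i,k+1-i)$ and $v_i=(c+1,i,k+1-i)$ for $i\in[k]$, and $w=(0,0,0)$. Take $B_1=[c,c+1]\times[0,1]\times[0,k]$ and $B_i=[0,c+1]\times[0,i]\times[0,k+1-i]$ for $2\le i\le k$. Then $B_1\cap P=\{u_1,v_1\}$ and $B_i\cap P=\{w,u_i,v_i\}$, so $\nu=\tau=2$. Moreover, $c$ is a median of the first coordinates and every box is crossing. Your first block is the single column $x_1=c$ (the range of $B_1$'s piece), and every piece spans it. The block traces are the pairwise disjoint singletons $\{u_i\}$, so the block family has packing number $k$ and needs $k$ block points, while $\{w,u_1\}$ pierces everything.

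So piercing the pieces that meet a block by points of that block is not bounded in terms of $\nu$. You also give no rule for which pieces to defer, or why only $O(\nu)$ blocks arise. The underlying issue is that restricting traces to a hyperplane or slab does not preserve disjointness, so the projection step of the continuous argument has no discrete analogue. Separately, a median split need not lower $\nu$ on either side. Since your recursion allows $(a,b)=(\nu,0)$, it need not make progress, and its depth then depends on $|P|$. The continuous argument instead chooses the cut to balance the packing numbers of the two sides.

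Your fallback is essentially the route of the cited proof; the paper notes that it uses the Alon--Kleitman argument plus additional Ramsey theory. But the step you call supersaturation is exactly the missing idea. The $\varepsilon$-nets from bounded VC-dimension are fine. Any fractional Helly theorem for boxes, however, needs as input many $k$-tuples with a common point for some $k\ge d+1\ge 3$ (for $d\ge 2$). Already for continuous rectangles, long thin horizontal and vertical strips show that many intersecting pairs force nothing.

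Tur\'an/Ramsey supersaturation applied to the $(p,2)$-property only yields many $k$-tuples that are pairwise intersecting in $P$. For ordinary boxes that suffices, since their Helly number is $2$. For traces the Helly number is $2d$, and pairwise intersection in $P$ does not give a common point of $P$. Upgrading the $(p,2)$-property to many $k$-tuples with a common point of $P$ (for instance, to a $(p',q')$-property with $q'\ge d+1$, the Edwards--Sober\'on range) is precisely what separates the known case $q\ge d+1$ from the statement for $q\ge 2$. Your sketch does not address it. The fractional Helly theorem for $\mathcal B|_P$ also needs its own argument; it is not obtained just by quoting Halman's Helly-number statement. As written, neither route establishes the existence of $N(p,q,d)$.
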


However, since the proof of Theorem~\ref{theorem: theorem 5 in GPR25} relies on the Alon--Kleitman argument~\cite{alon1992piercing} together with additional applications of Ramsey theory \cite{ramsey1930problem}, the resulting upper bound on \(N\) is very large when compared with the bounds known for standard boxes in \cite{tomon2023lower}.

\begin{theorem}[Theorem~4.1 in~\cite{tomon2023lower}]
\label{theorem: piercing rectangles}
Let \(\mathcal B\) be a finite family of boxes in \(\mathbb{R}^d\), where $d \geq 2$ is an integer.
Then
\[
\tau(\mathcal B) =
O\!\left(
\nu(\mathcal B)\,
\bigl(\log \nu(\mathcal B)\bigr)^{d-2}
\bigl(\log \log \nu(\mathcal B)\bigr)^2
\right),
\]

\end{theorem}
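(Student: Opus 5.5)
The plan is to induct on the dimension $d$. The base case $d=2$ carries the whole $(\log\log\nu)^2$ factor. Each further dimension costs one factor of $\log \nu$, paid for by a divide-and-conquer step along the last coordinate. Write $f_d(k)$ for the maximum of $\tau(\mathcal B)$ over finite families $\mathcal B$ of boxes in $\mathbb R^d$ with $\nu(\mathcal B)\le k$. The goal is $f_d(k)=O\bigl(k(\log k)^{d-2}(\log\log k)^2\bigr)$ for $k\ge 3$, say, with small $k$ absorbed into the constant. Note $f_d(1)=1$ by the Helly-type statement for pairwise intersecting boxes recalled in the introduction.

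\textbf{Base case $d=2$.} For rectangles I would not reprove the bound from scratch. I would rely on the known planar estimate $\tau=O\bigl(\nu(\log\log\nu)^2\bigr)$, which comes from the LP-based line of work on rectangles. That argument has two steps.
\begin{enumerate}
\item Bound the fractional transversal number $\tau^*=\nu^*$ by $O(\nu\log\log\nu)$. This uses a charging argument on a maximum packing: each rectangle of $\mathcal B$ meets the packing in a controlled way. One splits into rectangles containing a corner of a packing rectangle, which are pierced by $O(\nu)$ points, and rectangles crossing packing rectangles in a ``cross'' pattern.
\item Round the fractional solution using an $\varepsilon$-net bound of order $O\bigl(\tfrac1\varepsilon\log\log\tfrac1\varepsilon\bigr)$ for rectangles (Aronov--Ezra--Sharir). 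This turns $\tau^*$ into $O(\tau^*\log\log\tau^*)$.
\end{enumerate}
Combining the two gives $O\bigl(\nu(\log\log\nu)^2\bigr)$. I expect step~1, the fractional bound, to be the main obstacle: the crossing configurations are exactly where the $\log\log$ loss enters. If I had to prove it myself, I would first try to pierce crossing rectangles along the sides of a maximal packing via a one-dimensional argument, and accept an extra $\log\log$ factor there.

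\textbf{Induction step.} Let $\mathcal B\subseteq\mathbb R^d$ be a finite family with $\nu(\mathcal B)=k\ge 2$ and $d\ge 3$. For $c\in\mathbb R$ define
\begin{itemize}
\item $\mathcal B^-_c$: the boxes lying strictly below the hyperplane $H_c=\{x_d=c\}$;
\item $\mathcal B^+_c$: the boxes lying strictly above $H_c$;
\item $\mathcal B^0_c$: the boxes meeting $H_c$.
\end{itemize}
Boxes in $\mathcal B^-_c$ and $\mathcal B^+_c$ are disjoint from each other, so $\nu(\mathcal B^-_c)+\nu(\mathcal B^+_c)\le k$.

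As $c$ increases, $\nu(\mathcal B^-_c)$ is nondecreasing and $\nu(\mathcal B^+_c)$ is nonincreasing, and both change only at the finitely many endpoint coordinates. Choose $c$ as the infimum of values with $\nu(\mathcal B^-_c)>k/2$; if no such value exists, take $c$ above all boxes. Closedness of boxes makes $\mathcal B^-_c$ equal to $\bigcup_{c'<c}\mathcal B^-_{c'}$, so $\nu(\mathcal B^-_c)\le k/2$. For $\mathcal B^+_c$: each box in it already lies above $H_{c'}$ for some $c'>c$, while $\nu(\mathcal B^-_{c'})>k/2$, hence $\nu(\mathcal B^+_c)<k/2$. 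With this choice both $\nu(\mathcal B^\pm_c)\le k/2$.

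Every box in $\mathcal B^0_c$ meets $H_c$ in a $(d-1)$-dimensional box. Two such boxes intersect if and only if their sections on $H_c$ intersect, because two intersecting boxes crossing $H_c$ share a point on $H_c$ by taking the $d$-th coordinate $c$. So the sections form a family in $\mathbb R^{d-1}$ with packing number at most $k$. Points on $H_c$ piercing the sections pierce the original boxes.

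This yields the recurrence
\[
f_d(k)\le 2f_d(\lfloor k/2\rfloor)+f_{d-1}(k).
\]
Unrolling over $O(\log k)$ levels, the level-$j$ contribution is $2^j f_{d-1}(k/2^j)\le f_{d-1}(k)$, using that $f_{d-1}(m)/m$ is essentially nondecreasing. This gives $f_d(k)=O\bigl(f_{d-1}(k)\log k\bigr)$, and by induction the claimed bound $O\bigl(k(\log k)^{d-2}(\log\log k)^2\bigr)$.
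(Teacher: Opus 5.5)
The paper gives no proof of this statement. It is quoted from Tomon and used as a black box in Section~4.1, and only for $d=2$. So there is nothing internal to compare against, and your argument has to be judged on its own.

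\textbf{Induction step.} What you actually prove is the reduction from dimension $d$ to dimension $d-1$, and that part is correct. It is K\'arolyi's divide-and-conquer along the last coordinate.
\begin{itemize}
\item Your choice of $c$ gives $\nu(\mathcal B^{\pm}_c)\le\lfloor k/2\rfloor$. This uses only that $\mathcal B$ is finite and that ``strictly below/above $H_c$'' is an open condition in $c$.
\item Boxes meeting $H_c$ intersect exactly when their sections do.
\item The level-$j$ estimate can be made exact rather than ``essential''. Place two extremal families far apart: packing and piercing numbers then add, so $f_{d-1}(a+b)\ge f_{d-1}(a)+f_{d-1}(b)$. Since $f_{d-1}$ is nondecreasing, this gives $2^j f_{d-1}(\lfloor k/2^j\rfloor)\le f_{d-1}(k)$.
\item With $f_d(0)=0$ this yields $f_d(k)\le(1+\log_2 k)\,f_{d-1}(k)$, and hence $f_d(k)\le(1+\log_2 k)^{d-2}f_2(k)$.
\end{itemize}

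\textbf{Base case.} This is the only case the paper needs. For $d=2$ your argument is a citation of exactly the statement being proved. That is legitimate here, since the paper also only cites it, but you should present it as a citation rather than as a proof sketch.

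Your step~1 sketch is also inaccurate as written. A rectangle can meet a packing rectangle $M$ without containing a corner of $M$ and without crossing it. It may lie inside $M$, or enter $M$ through one side with two of its own corners inside $M$. So your two-case split is incomplete and does not yield $\tau^*=O(\nu\log\log\nu)$. That estimate should simply be attributed to Chalermsook and Chuzhoy's analysis of the LP relaxation for independent sets of rectangles. The rounding via the Aronov--Ezra--Sharir $\varepsilon$-nets is correct as you describe it in step~2.
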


In this paper, we establish comparatively small upper bounds for \(N(2,2,2)\) and \(N(p,2,2)\), where $N(p,q,d)$ is the minimal universal upper bound on the piercing number $\tau(\mathcal B|_P)$ for any set $P\subseteq \mathbb R^d$ and any finite family $\mathcal B$ of axis-parallel boxes in $\mathbb R^d$ such that $\mathcal B|_P$ has the $(p,q)$-property and contains no empty set.

\begin{theorem}\label{theorem: N(2,2,2)}
We have \(N(2,2,2) \le 4\).
\end{theorem}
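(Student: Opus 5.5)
The plan is to reduce to the structure of a pairwise intersecting family of rectangles and then choose four extremal points of $P$ around the common intersection. We may assume $P$ is finite, since only finitely many witness points matter. Write each $B\in\mathcal B$ as $[l_B,r_B]\times[d_B,u_B]$. Because $\mathcal B|_P$ has the $(2,2)$-property, the rectangles are pairwise intersecting. By the Helly-type fact for boxes recalled in the introduction, their common intersection is the rectangle
\[
R_0=[a_1,a_2]\times[b_1,b_2],\qquad a_1=\max_B l_B,\ a_2=\min_B r_B,\ b_1=\max_B d_B,\ b_2=\min_B u_B .
\]
If $R_0\cap P\neq\emptyset$, one point suffices. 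So the interesting case is $R_0\cap P=\emptyset$. Fix rectangles $L,R,D,U\in\mathcal B$ attaining $a_1,a_2,b_1,b_2$ respectively.

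Next I define the four candidate points on the ``cross'' through $R_0$. Let $V=[a_1,a_2]\times\mathbb R$ be the vertical strip and $H=\mathbb R\times[b_1,b_2]$ the horizontal strip. Every $B$ contains the full $x$-range of $V$ and the full $y$-range of $H$. Let $p_{\uparrow}$ be the point of $P\cap V$ above $R_0$ with smallest $y$-coordinate, and $p_{\downarrow}$ the point of $P\cap V$ below $R_0$ with largest $y$-coordinate. Define $p_{\rightarrow}$ and $p_{\leftarrow}$ analogously in $H$. The monotonicity observation is this: if $B$ contains some point $q\in P\cap V$, say above $R_0$, then $B$'s $y$-interval contains both $[b_1,b_2]$ and $y(q)$. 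Hence it contains $y(p_\uparrow)\in(b_2,y(q)]$, so $p_\uparrow\in B$. The other three directions are symmetric. Thus $S=\{p_\uparrow,p_\downarrow,p_\leftarrow,p_\rightarrow\}$ pierces every rectangle whose trace meets $V\cup H$.

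The remaining and main step is to show that every $B$ meets $P\cap(V\cup H)$, or else to modify the choice of the four points so that the corner case is covered. I would use the $(2,2)$-property with the extremal rectangles. $B\cap R$ contains a point of $P$ with $x\in[l_B,a_2]$, and $B\cap L$ contains one with $x\in[a_1,r_B]$; if either lies in $[a_1,a_2]$ we are in $V$. Otherwise $B\cap R\cap P$ has a point strictly left of $a_1$ and $B\cap L\cap P$ has one strictly right of $a_2$. The same holds vertically with $D,U$. I would then exploit the points of $L\cap R\cap P$ (which lie in $V$) and $D\cap U\cap P$ (which lie in $H$), together with the extremality of $p_\uparrow,\dots$. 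The goal is to show that a rectangle seeing $P$ only in the four open corner quadrants forces a contradiction with some pair intersection, for instance $B$ versus $L$ or $R$ combined with the positions of the $y$-ranges of $L$ and $R$.

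I expect this corner case to be the real obstacle. If the direct argument fails, the fallback is to choose the four points not as nearest points on the cross but as extremal points among $L\cap R\cap P$ and $D\cap U\cap P$: the topmost and bottommost in $L\cap R$, and the leftmost and rightmost in $D\cap U$. One would then verify by case analysis on which corner quadrants $B\cap P$ occupies, using that $B$ must meet each of $L,R,D,U$ inside $P$.
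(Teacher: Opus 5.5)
Your first step is correct: the four points of $P$ nearest to $R_0$ on the cross pierce every rectangle whose trace meets $V\cup H$. The gap is the step you flag as the main one. It is not merely hard but false, and your fallback choice fails on the same example. Take
\[
B=[-1,1]^2,\quad L=[0,3]\times[-3,3],\quad R=[-3,0]\times[-3,3],\quad D=[-3,3]\times[0,3],\quad U=[-3,3]\times[-3,0],
\]
and $P=\{(1,1),(-1,-1),(0,2),(2,0)\}$. Every trace is nonempty, and each of the ten pairs shares a point of $P$. Also $R_0=\{(0,0)\}$ misses $P$, and $L,R,D,U$ are the unique rectangles attaining $a_1,a_2,b_1,b_2$.

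Here $P\cap V=L\cap R\cap P=\{(0,2)\}$ and $P\cap H=D\cap U\cap P=\{(2,0)\}$. So both of your candidate sets equal $\{(0,2),(2,0)\}$, whereas $B\cap P=\{(1,1),(-1,-1)\}$ lies in the open corner quadrants. Moreover, $B$ meets $R$, $L$, $D$, $U$ inside $P$ only to the left of $a_1$, right of $a_2$, above $b_2$ and below $b_1$, respectively. That is exactly what your case analysis permits, so the contradiction you hope to extract does not exist. The theorem itself survives, since $\{(1,1),(-1,-1)\}$ pierces everything; it is the choice of points that fails.

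What is missing is a global rule for placing points in the corner regions, coordinated between quadrants. It cannot be read off from the cross or from the four extremal rectangles. The paper gets the statement as the case $p=2$ of Theorem~\ref{theorem: N(p,2,2) with every two intersect}, as follows.

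With the origin in $R_0$, only the minimal points of $P$ in each closed quadrant matter. They form a monotone staircase, and the trace of each $B$ on it is a run of consecutive points. After extending to polygonal chains, the components in quadrants $1$ and $4$ end at the common abscissa $r_B$, and those in quadrants $2$ and $3$ end at $l_B$. Candidate piercing sets are then parametrized by $\Delta_1\times\Delta_1$: one vertical cut $x=t_1\ge 0$ on the two right staircases and one cut $x=-t_2\le 0$ on the two left ones.

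If no pair of cuts pierces every member, apply Komiya's polytopal KKMS theorem to the open sets of parameters through which some member escapes in a prescribed pattern. This yields a fractional perfect matching in $K_{2,2}$, hence a perfect matching, hence two disjoint members, contradicting the $(2,2)$-property. Your approach needs an existence argument of this global kind, or an elementary substitute for it on the square, to handle rectangles whose traces lie only in the corners.
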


\begin{theorem}\label{theorem: N(p,2,2)}
We have $ N(p,2,2)= \Omega (p \log p)$ and $  N(p,2,2) =  O\!\bigl( (p\log  \log p)^2 \bigr)$.
\end{theorem}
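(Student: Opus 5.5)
The statement has two parts. For the lower bound $N(p,2,2)=\Omega(p\log p)$ I would use the Pach--Tardos lower bound for $\varepsilon$-nets of axis-parallel rectangles. That result gives, for every small $\varepsilon>0$, a finite $P\subseteq\mathbb R^2$, $|P|=n$, such that every subset of $P$ meeting all rectangles with at least $\varepsilon n$ points of $P$ has size $\Omega(\tfrac1\varepsilon\log\tfrac1\varepsilon)$. Let $\mathcal B$ consist of one rectangle for each distinct trace $B\cap P$ with $|B\cap P|\ge\varepsilon n$. Then $\emptyset\notin\mathcal B|_P$, and $\tau(\mathcal B|_P)$ is exactly the minimal $\varepsilon$-net size. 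If $p>1/\varepsilon$, then among any $p$ members the traces have total size $>n$, so by pigeonhole two of them share a point of $P$; this is the $(p,2)$-property. Taking $\varepsilon\approx 1/(p-1)$ gives $\tau=\Omega(p\log p)$.

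For the upper bound I would first reduce to the continuous setting. Replace every $B\in\mathcal B$ by the bounding box of $B\cap P$. This does not change the trace, and afterwards every rectangle is spanned by points of $P$. Two shrunken rectangles that are disjoint in $\mathbb R^2$ have disjoint traces. Hence the $(p,2)$-property gives $\nu(\mathcal B)\le p-1$ for the shrunken family, and Theorem~\ref{theorem: piercing rectangles} with $d=2$ yields a set $T\subseteq\mathbb R^2$ (not necessarily in $P$) of size $O\bigl(p(\log\log p)^2\bigr)$ meeting every shrunken rectangle. It then suffices to show that for each $t\in T$ the subfamily $\mathcal B_t$ of rectangles containing $t$ satisfies $\tau(\mathcal B_t|_P)=O(p)$. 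Summing over $t\in T$ gives $O\bigl(p^2(\log\log p)^2\bigr)=O\bigl((p\log\log p)^2\bigr)$.

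To pierce $\mathcal B_t$, split the plane into the four closed quadrants around $t$ and assign each $B\in\mathcal B_t$ to a quadrant $Q$ with $B\cap P\cap Q\neq\emptyset$. Take the NE quadrant as an example. There $B\cap Q$ is a rectangle with lower-left corner $t$ and upper-right corner $c_B$, so its trace is the down-set $\{x\in P\cap Q: x\le c_B\}$ in the dominance order. Let $M$ be the set of minimal elements of $P\cap Q$. They form a staircase, and any such down-set meets $M$ in a nonempty contiguous interval of the staircase. Moreover, if two down-sets share any point $x$, then a minimal $m\le x$ is shared as well. So the NE-traces behave exactly like intervals on a line, and the interval theorem gives $\tau_Q=\nu_Q$, where $\nu_Q$ is the maximal number of members with pairwise disjoint $Q$-traces. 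The same holds for the other three quadrants.

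The main obstacle is bounding $\nu_Q=O(p)$. Members with pairwise disjoint NE-traces may still intersect discretely in another quadrant, so the $(p,2)$-property does not apply to them directly. My plan is to take $m=\nu_Q$ rectangles with disjoint NE-staircase intervals, ordered along the staircase, so that their corners $c_B$ are monotone. I would then show that their traces in the NW, SE and SW quadrants, which are again interval-type families on the respective staircases, intersect in a constrained pattern. Disjointness in NE forces one coordinate of $c_B$ to be monotone, so in the NW and SE quadrants only nested or neighbouring members can meet. I would aim for the claim that the union of these three interval intersection graphs on the $m$ members has an independent set of size $\Omega(m)$, for instance via a bounded-degeneracy or bounded-clique-cover argument. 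Combined with the $(p,2)$-property this would give $m=O(p)$, and hence $\tau(\mathcal B_t|_P)\le\sum_Q\nu_Q=O(p)$. If this linear bound fails, a weaker $O(p\,\mathrm{polyloglog}\,p)$ bound would still give a polynomial bound, but not the stated one.
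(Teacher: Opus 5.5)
Both parts of your proposal have a genuine gap.

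\textbf{Lower bound.} The $\varepsilon$-net fact you quote is false for the range space you use. The $\Omega(\frac1\varepsilon\log\frac1\varepsilon)$ bound of Pach and Tardos concerns the \emph{dual} range space of rectangles: the ground set is a family of rectangles, and the ranges are the subfamilies containing a common point. You use the primal range space, with ground set $P$ and ranges the traces $B\cap P$. For that space, Aronov, Ezra and Sharir proved that $\varepsilon$-nets of size $O(\frac1\varepsilon\log\log\frac1\varepsilon)$ always exist, and Pach--Tardos showed this is tight. Hence your family of heavy traces always has $\tau(\mathcal B|_P)=O(p\log\log p)$. Your pigeonhole argument for the $(p,2)$-property is fine, but this construction can give at most $\Omega(p\log\log p)$.

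The paper gets the $(p,2)$-property from an independence-number bound rather than from counting. In $\mathcal R(3,c_2)$ it picks, for each edge of $G^\ast(3,c_2)$, one point covered by exactly those two rectangles. Every point then lies in exactly two traces, so $\tau\ge|\mathcal R'_{c_2}|/2\ge (c_2-1)3^{c_2-1}/2$. Meanwhile $\nu=\alpha(G'_{c_2})\le 2\cdot 3^{c_2-1}$ by Lemma~\ref{lemma: lemma due to Pach and Tardos} with $r=2$. This gives $\tau/\nu=\Omega(\log\nu)$.

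\textbf{Upper bound.} Your reduction via Theorem~\ref{theorem: piercing rectangles} to the subfamilies $\mathcal B_t$ matches the paper. The core step $\tau(\mathcal B_t|_P)=O(p)$ is not proved, however, and the quadrant-by-quadrant route cannot give it. With an arbitrary assignment, $\nu_Q$ is unbounded even for $p=2$. Take $t=(0,0)$, $s=(-1,-1)$, $m_i=(i,m+1-i)$ for $i\in[m]$, $P=\{s,m_1,\dots,m_m\}$, and $B_i=[-1,i]\times[-1,m+1-i]$. Then $B_i\cap P=\{s,m_i\}$, so all traces share $s$ and $\tau(\mathcal B_t|_P)=1$. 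Yet if every $B_i$ is assigned to NE, then $\nu_{NE}=m$.

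Disjointness in NE makes both coordinates of the corners $c_B$ monotone, but it says nothing about the SW corners. In this example the SW interval graph is complete, so the independent set of size $\Omega(m)$ you hope for does not exist. A good assignment does exist, but only a posteriori, read off from a piercing set.

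The missing idea is to keep the four staircase intervals of $B$ together as one $4$-interval $I_B$. The $(p,2)$-property gives $\nu(\{I_B\})\le p-1$, and you then pierce these $4$-intervals jointly. Theorem~\ref{theorem: discrete d-intervals} already gives $\tau\le 12(p-1)$. The paper's KKMS argument on $\Delta_{p-1}\times\Delta_{p-1}$ improves this to $4(p-1)$, using that $I_B^1,I_B^4$ (and likewise $I_B^2,I_B^3$) end at a common $x$-coordinate.
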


Unfortunately, our method does not seem to extend to higher dimensions, and obtaining better bounds in higher dimensions may require more complicated tools.

The paper is organized as follows.  
In Section~\ref{section: preliminary}, we introduce \(d\)-intervals and the polytopal KKMS theorem, which are used in the proofs.  
Section~\ref{section: proof of theorem N(2,2,2)} is devoted to the proof of Theorem~\ref{theorem: N(2,2,2)}.  
In Section~\ref{section: proof of theorem N(p,2,2)}, we prove Theorem~\ref{theorem: N(p,2,2)}.

\section{Preliminaries}\label{section: preliminary}
Let \(L_1,\dots,L_d\) be \(d\) pairwise disjoint parallel lines in the plane\footnote{One may replace pairwise disjoint lines by pairwise disjoint homeomorphic copies of the real line.}.  
A \emph{\(d\)-interval} is a set consisting of \(d\) convex components \(I^{1},\dots,I^{d}\) such that \(I^{i} \subseteq L_i\) for each \(i \in [d]\).

\begin{theorem}[Theorem~1.4 in~\cite{kaiser1997transversals}]
\label{theorem: discrete d-intervals}
Let \(\mathcal I\) be a finite family of \(d\)-intervals with \(d \ge 2\).
Then $\tau(\mathcal I) \le (d^2 - d)\, \nu(\mathcal I)$.
\end{theorem}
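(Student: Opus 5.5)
The plan is to follow Kaiser's topological approach: if no $(d^2-d)\nu$ points pierce $\mathcal I$, then a fixed-point-type theorem on a product of simplices, namely the polytopal KKMS theorem introduced in this section, produces more than $\nu$ pairwise disjoint $d$-intervals, which is a contradiction.

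Let $k=\nu(\mathcal I)$ and set $n=(d-1)k$. Identify each $L_i$ with $\mathbb R$. Since $\mathcal I$ is finite, we may assume all components lie in $[0,1]$ and, after a small perturbation, are closed intervals. We place $n$ candidate piercing points on each line, for $dn=(d^2-d)k$ points in total, and suppose for contradiction that no such placement pierces $\mathcal I$.

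A placement on line $L_i$ is encoded by a point $y^i\in\Delta^{n}$, the $n$-simplex with vertices $0,\dots,n$, via $x^i_j=y^i_0+\dots+y^i_{j-1}$ for $j\in[n]$. These points cut $[0,1]$ into $n+1$ closed segments $S^i_0,\dots,S^i_n$, where segment $S^i_j$ has length $y^i_j$. A full configuration is a point of the polytope $Q=\Delta^n\times\dots\times\Delta^n$ ($d$ factors).

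For every pair $(i,j)$ (facet-type index) we define a closed set
\[
C^i_j=\{y\in Q:\ \text{some } I\in\mathcal I \text{ has its } i\text{-th component contained in } S^i_j(y)\}.
\]
Because the placement never pierces, every $I\in\mathcal I$ has each component inside a single segment. Hence $\{C^i_j\}$ covers $Q$.

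Next we verify the boundary condition of the polytopal KKMS theorem. On the face $y^i_j=0$ the segment $S^i_j$ degenerates to a point. A component cannot lie in a single point without being pierced, provided we first perturb so that no component is a single point. So the KKMS-type covering condition holds with respect to the facets $\{y^i_j=0\}$. The theorem then yields a point $y^*$ and a collection $\mathcal C$ of pairs $(i,j)$ such that $y^*\in\bigcap_{(i,j)\in\mathcal C}C^i_j$ and the corresponding facet normals are balanced. Concretely, there are weights $\lambda_{ij}>0$ with $\sum_{(i,j)\in\mathcal C}\lambda_{ij}e^i_j$ proportional to the barycenter of $Q$. This means that for each line $i$ the weights on its $n+1$ segments add up to the same total.

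Finally we turn this into a matching. Form the $d$-partite hypergraph $H$ whose vertex set is the set of all segments of $y^*$ on all lines. Each $I\in\mathcal I$ defines an edge consisting of the $d$ segments containing its components. The pairs $(i,j)\in\mathcal C$ are witnessed by intervals whose $i$-th component sits in $S^i_j$. Two $d$-intervals with edges disjoint in $H$ are disjoint. By the balancing, the weights give a fractional matching of $H$ (or fractional cover dual) of value roughly $d(n+1)/d$, i.e. about $n+1=(d-1)k+1$. Since $H$ is $d$-partite and $d$-uniform, F\"uredi's theorem gives $\nu(H)\ge \nu^*(H)/(d-1)$. This yields $\nu(H)\ge ((d-1)k+1)/(d-1)>k$, hence $k+1$ pairwise disjoint $d$-intervals, contradicting $\nu(\mathcal I)=k$.

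The main obstacle is this last step: making the passage from the balanced KKMS collection to a genuine fractional matching of the right size precise. Each witness interval is guaranteed only along one line, so its other components must be assigned to segments without losing weight. If a direct argument fails, I would first try defining the sets $C^i_j$ with a choice rule (for example, the leftmost-ending witness) so that edges are determined consistently, and then apply LP duality to bound $\tau^*(H)$ from below.
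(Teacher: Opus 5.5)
The paper only cites this result from Kaiser and gives no proof of it. Its proof of Theorem~\ref{theorem: N(p,2,2) with every two intersect}, however, runs the topological scheme you are aiming for, and comparing with it shows a genuine gap: your covering sets are indexed by the wrong objects.

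Each $C^i_j$ is attached to a pair (line $i$, segment $j$), i.e.\ to a facet $\{y^i_j=0\}$ of $Q=(\Delta^n)^d$. Komiya's theorem does not accept such an assignment, since every face $F'$, in particular every vertex, must be covered by sets attached to faces contained in $F'$. The most a facet-indexed KKM-type covering can give is a point $y^*$ lying in all $C^i_j$ at once. This is also all your balancing condition says, because a positive combination of the $e^i_j$ proportional to the barycenter must use every pair with equal weight. That only tells you that every segment of every line contains a component of \emph{some} unpierced $d$-interval, with unrelated witnesses for different pairs.

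Such data does not yield a fractional matching of value $n+1$. Putting weight on the full segment-tuple of the witness for $(i,j)$ also loads $d-1$ segments on other lines with no control. Concretely, take witness tuples $(j,0,\dots,0),(0,j,0,\dots,0),\dots,(0,\dots,0,j)$ for $j=0,\dots,n$. They meet every segment, yet segment $0$ of line $1$ together with segment $0$ of line $2$ cover all of them, so $\nu^*(H)\le 2$. A choice rule for witnesses cannot repair this; what is missing is joint control of all $d$ components of a single witness.

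The fix is to attach the sets to the \emph{vertices} $v=(j_1,\dots,j_d)$ of $Q$, with $B_F=\emptyset$ for all other faces.
\begin{itemize}
\item Define $B_v$ as the set of $y$ for which a single $I\in\mathcal I$ has, for every $i$, its $i$-th component inside the open $j_i$-th segment. These sets are open, and they cover $Q$ because no placement pierces.
\item For the boundary condition, take a face $F=\prod_i\{y^i:\ y^i_m=0 \text{ for } m\notin M^i\}$. A nonempty component can only sit in a nonempty open segment, so every $y\in F$ lies in some $B_v$ with $v\in F$. Using open segments also makes your perturbation unnecessary.
\item Komiya's theorem with $q(v)=v$ and $q(Q)$ the barycenter gives a set $T$ of vertices and a point $y^*\in\bigcap_{v\in T}B_v$.
\item Writing the barycenter as $\sum_{v\in T}t_v v$ gives $\sum_{v\in T,\,v_i=j}t_v=\frac{1}{n+1}$ for all $i,j$. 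Hence $f(e_v)=(n+1)t_v$ is a fractional perfect matching of value $n+1$ in the $d$-partite hypergraph $\{e_v:v\in T\}$.
\item The witnesses at $y^*$ of pairwise disjoint edges are pairwise disjoint $d$-intervals, since on each line their components lie in different open segments.
\end{itemize}
Your closing F\"uredi computation, $\nu\ge (n+1)/(d-1)>k$, then finishes the proof. This is exactly the role of the sets $B_v$ in the paper's proof of Theorem~\ref{theorem: N(p,2,2) with every two intersect}, where there are four lines paired up and K\H{o}nig's theorem replaces F\"uredi's.
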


We will also use the following polytopal KKMS theorem due to Komiya~\cite{komiya1994simple}.

\begin{theorem}[Theorem~2 in~\cite{komiya1994simple}]
\label{theorem: Komiya's extension}
Let \(Q\) be a polytope, and suppose that for every face \(F\) of \(Q\) a point \(q(F) \in F\) is chosen.
Let \(B_F\) be an open subset of \(Q\) assigned to each face \(F\), such that every face \(F'\) of \(Q\) satisfies $F' \subseteq \bigcup_{F \subseteq F'} B_F $.
Then there exists a collection \(\mathcal F\) of faces of \(Q\) such that $q(Q) \in \conv \{ q(F) : F \in \mathcal F \}
\quad \text{and} \quad
\bigcap_{F \in \mathcal F} B_F \neq \emptyset $.
\end{theorem}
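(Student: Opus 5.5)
The statement immediately above is the polytopal KKMS theorem (Theorem~\ref{theorem: Komiya's extension}), which the paper cites from Komiya rather than proving. Here is how I would prove it. The approach is to build a continuous map $f\colon Q\to Q$ that sends every face of $Q$ into itself and is assembled from the points $q(F)$ with weights supported in the sets $B_F$. I will then show that such a face-preserving map is surjective, pick a preimage $x$ of $q(Q)$, and read off the collection $\mathcal F$ from the weights at $x$.

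\textbf{Step 1: weights.} For each face $F$, set $\lambda_F(x)=\operatorname{dist}(x,Q\setminus B_F)$. This is continuous on $Q$ and positive exactly on $B_F$, since $B_F$ is open in $Q$. To keep $f$ face-preserving, I also need a cutoff that kills $F$ at points whose minimal face does not contain $F$. Let $F'(x)$ denote the minimal face containing $x$. The facets containing $x$ are exactly the facets containing $F'(x)$, and $F\subseteq F'(x)$ holds if and only if every such facet also contains $F$. So I define
\[
\rho_F(x)=\min\{\operatorname{dist}(x,G): G \text{ a facet of } Q,\ F\not\subseteq G\},
\]
with $\rho_Q\equiv 1$. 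This $\rho_F$ is continuous and vanishes exactly when $F\not\subseteq F'(x)$.

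\textbf{Step 2: the map.} Put $w_F=\lambda_F\rho_F$ and $f(x)=\sum_F w_F(x)\,q(F)\big/\sum_F w_F(x)$. The denominator never vanishes. Given $x$, apply the covering hypothesis to the face $F'(x)$: it yields a face $F\subseteq F'(x)$ with $x\in B_F$, and then $\lambda_F(x)>0$ and $\rho_F(x)>0$. Hence $f$ is well defined and continuous. Every $F$ with $w_F(x)>0$ satisfies $F\subseteq F'(x)$, so $q(F)\in F'(x)$. It follows that $f(x)\in F'(x)$, and in particular $f(F')\subseteq F'$ for every face $F'$.

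\textbf{Step 3: surjectivity.} This is the main topological step: a continuous self-map of a polytope that maps each face into itself is onto. I would prove it by the following argument.
\begin{itemize}
\item The straight-line homotopy $h_t=(1-t)\,\mathrm{id}+t f$ is face-preserving for every $t$, because faces are convex.
\item Hence $h_t$ maps $\partial Q$ into $\partial Q$ throughout, so $f|_{\partial Q}$ is homotopic to the identity within $\partial Q$.
\item Suppose a relative interior point $y$ of $Q$ were missed by $f$. Composing $f$ with radial projection from $y$ gives a retraction-type map $Q\to\partial Q$ whose restriction to $\partial Q$ is homotopic to the identity. This would make the identity of the sphere $\partial Q\cong S^{\dim Q-1}$ null-homotopic, a contradiction.
\item Boundary points are handled by applying the same argument to each face, using that $f$ restricts to a face-preserving map of that face; induction on dimension covers the degenerate cases.
\end{itemize}
I expect this step, and in particular the careful reduction to the relative interior, to be where the real work lies.

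\textbf{Step 4: conclusion.} Choose $x$ with $f(x)=q(Q)$, and let $\mathcal F=\{F: w_F(x)>0\}$. Then $q(Q)$ is a convex combination of the points $q(F)$ with $F\in\mathcal F$, so $q(Q)\in\conv\{q(F):F\in\mathcal F\}$. Moreover $\lambda_F(x)>0$ for each $F\in\mathcal F$, so $x\in\bigcap_{F\in\mathcal F}B_F$, which is therefore nonempty.
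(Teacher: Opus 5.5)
The paper does not prove this theorem; it quotes it from Komiya and uses it as a black box, so there is no in-paper argument to compare against. Your proposal is a valid self-contained proof along standard lines: weights supported in the $B_F$, a face-preserving map $f$, and a preimage of $q(Q)$. There is one slip you need to fix.

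\textbf{The slip: $\rho_Q\equiv 1$.} This contradicts your own claim that $\rho_F$ vanishes exactly when $F\not\subseteq F'(x)$. At a point $x\in\partial Q\cap B_Q$ you get $w_Q(x)=\lambda_Q(x)>0$, so $q(Q)$ enters the average and $f(x)$ need not lie in $F'(x)$. Face preservation then fails, and Step 3 with it.

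The special case is unnecessary. Every facet fails to contain $Q$, so your general formula gives $\rho_Q(x)=\min_G \operatorname{dist}(x,G)$ over all facets $G$. This vanishes exactly on the relative boundary, which is what you need. Alternatively, note that the theorem is trivial with $\mathcal F=\{Q\}$ whenever $B_Q\neq\emptyset$, so you may assume $B_Q=\emptyset$; this is also the situation in the paper's application. Separately, $\lambda_F=\operatorname{dist}(\cdot,Q\setminus B_F)$ is undefined when $B_F=Q$; set $\lambda_F\equiv 1$ there.

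\textbf{Step 3 and an alternative.} With these fixes, Step 3 is correct. Since $Q$ is contractible, $r_y\circ f$ would make $f|_{\partial Q}\simeq\mathrm{id}_{\partial Q}$ null-homotopic. For $y\in\partial Q$ you can apply the same argument directly to the face having $y$ in its relative interior; no induction is needed.

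A shorter route uses only Brouwer's theorem and needs no face-by-face reduction. Let $\pi_Q$ be the nearest-point map onto $Q$, and take a fixed point $x$ of $x\mapsto\pi_Q(x+q(Q)-f(x))$. Then $v=q(Q)-f(x)$ satisfies $\langle v,z-x\rangle\le 0$ for all $z\in Q$. Because $f(x)\in F'(x)$ and $x$ lies in the relative interior of $F'(x)$, testing $z=f(x)$ and $z=x-\varepsilon(f(x)-x)$ gives $\langle v,f(x)-x\rangle=0$. Testing $z=q(Q)$ then yields $|v|^2\le 0$, so $f(x)=q(Q)$.

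Your route proves the stronger statement that $f$ is onto, at the cost of invoking the non-contractibility of spheres. The Brouwer route only hits the single point $q(Q)$, but it handles boundary positions of $q(Q)$ uniformly.
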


\section{Proof of Theorem~\ref{theorem: N(2,2,2)}}\label{section: proof of theorem N(2,2,2)}

The proof is organized as follows. First, we present a stronger auxiliary statement, whose special case \(p=2\) implies Theorem~\ref{theorem: N(2,2,2)}. We then reduce the problem from rectangles to \(4\)-intervals. Finally, adapting the argument used in the proof of Theorem~6.3 in~\cite{aharoni2017fractional} for \(d\)-intervals, we prove the auxiliary statement.
We include several explanatory remarks to clarify the underlying ideas of the topological argument.

We now present the following stronger auxiliary statement.

\begin{theorem}\label{theorem: N(p,2,2) with every two intersect}
Let $p \geq 2$ be an integer, and let \(P \subseteq \mathbb{R}^2\). 
Suppose \(\mathcal{B}\) is a finite family of rectangles in \(\mathbb{R}^2\) satisfying the following conditions:
\begin{enumerate}
    \item every rectangle contains at least one point of \(P\);
    \item every pair of rectangles in \(\mathcal B\) has nonempty intersection;
    \item among any \(p\) rectangles, there exist $2$ whose intersection contains a point of \(P\).
\end{enumerate}
Then there exists a subset \(S \subseteq P\) of size at most \(4(p-1)\) such that
\(B \cap S \neq \emptyset\) for every \(B \in \mathcal{B}\).
\end{theorem}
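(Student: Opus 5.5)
Since every two rectangles of \(\mathcal B\) intersect, the Helly property for boxes gives a point \(c=(c_1,c_2)\in\bigcap\mathcal B\). Every rectangle then contains \(c\), so it splits into four closed quadrant pieces around \(c\). Let \(Q_1,\dots,Q_4\) be the four closed quadrants with apex \(c\) (NE, NW, SW, SE). For \(B\in\mathcal B\), put \(B^j\coloneqq B\cap Q_j\), a rectangle with one corner at \(c\).

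The key observation is that each \(B^j\) is determined by a single ``corner'' point \((x_B^j,y_B^j)\) of \(B\) in \(Q_j\), namely the corner of \(B\) opposite \(c\) within \(Q_j\). Moreover,
\[
P\cap B^j=\{z\in P\cap Q_j : z \preceq_j (x_B^j,y_B^j)\},
\]
where \(\preceq_j\) is the coordinatewise order oriented away from \(c\). This is a lower set in a two-dimensional dominance order, not a one-dimensional interval. To reduce to \(4\)-intervals, I would pass to a fractional/topological argument rather than a literal embedding into lines. Following the proof of Theorem~6.3 in~\cite{aharoni2017fractional}, the parameter space of each piece is the staircase of points of \(P\cap Q_j\) that are maximal in \(B^j\). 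Choosing, for each face, representative points along a path, I would parametrize each quadrant by a curve \(L_j\) (a staircase of \(P\cap Q_j\), possibly after restricting \(P\) to finitely many relevant points, which is allowed since \(\mathcal B\) is finite). The curve must be chosen so that each \(B^j\cap P\), restricted to the points that matter, behaves like an interval. This is the reduction ``rectangles to \(4\)-intervals'' announced in the text. I expect that only points of \(P\) that are minimal (closest to \(c\) in \(\preceq_j\)) inside some \(B^j\) need to be kept, and these form an antichain. Along an antichain ordered by the \(x\)-coordinate, each \(B^j\cap P\) is indeed an interval. Replacing a point by a more \(c\)-ward dominated point of \(P\) only enlarges the set of rectangles it pierces, so we may restrict \(P\cap Q_j\) to the antichain \(A_j\) of \(\preceq_j\)-minimal points. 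Each \(B\) then becomes a \(4\)-interval \(I_B\) with components \(B\cap A_j\) on the line \(L_j\supseteq A_j\). Two rectangles share a point of \(P\) essentially iff their \(4\)-intervals share a point.

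With this reduction, hypothesis 3 says \(\mathcal I=\{I_B\}\) has the \((p,2)\)-property, so \(\nu(\mathcal I)\le p-1\). Theorem~\ref{theorem: discrete d-intervals} would only give \(12(p-1)\), too weak. So I would run the KKMS argument directly, using Theorem~\ref{theorem: Komiya's extension} on a product of simplices \(\Delta^{n_1}\times\cdots\times\Delta^{n_4}\) (or a suitable polytope). Vertices correspond to points of the \(A_j\), and the open sets \(B_F\) encode which \(4\)-intervals cover a face. As in Aharoni--Kaiser--Zerbib, this should yield \(\tau^*\)-type bounds: either a transversal of size at most \(4\nu\) exists, with one piece per component, or there is a matching of size greater than \(\nu\).

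The improvement over \(d^2-d\) comes from the structural fact that every \(4\)-interval meets all four lines and any two rectangles intersect at \(c\). This is why the factor drops to \(d=4\) per unit of matching number. The main obstacle is the topological step: setting up the covers so that an intersection point of the \(B_F\) together with \(q(Q)\in\conv\{q(F)\}\) produces either \(p\) pairwise \(P\)-disjoint rectangles or a piercing set with at most \(p-1\) points on each \(L_j\). That gives \(4(p-1)\) in total. I would first try the case \(p=2\), where each \(L_j\) should need one point, to calibrate the construction.
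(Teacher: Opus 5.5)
Your preliminary steps (a common point $c$ via Helly, the quadrant split, and discarding non-minimal points so that each $B\cap P\cap Q_j$ is an interval along an antichain) agree with the paper. But the step that actually yields $4(p-1)$ is left open, and the route you sketch for it cannot reach that bound.

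Applying Komiya's theorem to a product of four simplices, with holes placed independently on the four chains, is essentially the Aharoni--Kaiser--Zerbib proof of Kaiser's bound. A hole-avoiding $4$-interval then records a $4$-tuple of part indices, and the KKMS conclusion gives a fractional perfect matching in a $4$-partite $4$-uniform hypergraph. Nothing in the argument controls which hypergraph arises, so you can only extract an integral matching through F\"uredi's $\nu\ge\nu^*/3$. That forces about $3(p-1)$ holes per chain, i.e.\ $12(p-1)$ again, which is exactly the bound you dismissed.

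The structural fact you invoke does not rescue this. It is not true that every $I_B$ meets all four lines: a rectangle may contain no point of $P$ in some quadrant, and the paper has to handle empty components when checking the face condition. The common point $c$ is already fully used up by the quadrant decomposition.

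The missing idea is to \emph{couple the chains in pairs} through the vertical sides of $B=[x_1,x_2]\times[y_1,y_2]$. Arrange $\Pi_x(L_1)=\Pi_x(L_4)$ and $\Pi_x(L_2)=\Pi_x(L_3)$, and enlarge each nonempty component as follows. On $L_1$ and $L_4$, let $I_B^j$ run from the leftmost point of $P$ in $B\cap Q_j$ to the point of $L_j$ with $x$-coordinate $x_2$. On $L_2$ and $L_3$, let it run from the point with $x$-coordinate $x_1$ to the rightmost such point of $P$.

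These enlarged components still lie in $B$ and contain all its $P$-points, so the $(p,2)$-property is preserved. A piercing set of the enlarged family can be moved into $P$: on $L_1,L_4$ (resp.\ $L_2,L_3$) the intersection of the components through a point has its left (resp.\ right) endpoint in $P$.

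Now place the $p-1$ holes on $L_1$ and on $L_4$ at the same $x$-coordinates, governed by one factor $\Delta_{p-1}$. Place those on $L_2,L_3$ at common $x$-coordinates governed by a second factor. Because $I_B^1,I_B^4$ share their right $x$-coordinate (and $I_B^2,I_B^3$ their left one), a hole-avoiding $I_B$ escapes through parts with a common index on each pair.

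So escape patterns are edges of $K_{p,p}$, and the polytope is $\Delta_{p-1}\times\Delta_{p-1}$, with vertices indexed by part indices rather than by points of the antichains. The KKMS point then yields a fractional perfect matching in a \emph{bipartite} graph, which K\H{o}nig's theorem makes integral. That gives $p$ pairwise disjoint members of $\mathcal I$ against only $4(p-1)$ holes in total.
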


\begin{proof}[Proof of Theorem~\ref{theorem: N(p,2,2) with every two intersect}]
    Since \(\mathcal{B}\) is finite, we may assume that \(P\) is finite as well. 
As every two rectangles in \(\mathcal{B}\) intersect, a classical application of Helly’s theorem implies that all rectangles in \(\mathcal{B}\) have a common intersection. 
Choose the origin to be a point in \(\bigcap \mathcal{B}\), and assume that for every rectangle in \(\mathcal{B}\), its four vertices lie in the four quadrants\footnote{Here we adopt the standard convention for the four quadrants of the plane with some changes: the first quadrant consists of points with nonnegative $x$- and $y$-coordinates, and the remaining quadrants are labeled in counterclockwise order (second, third, and fourth quadrants, respectively). Hence, each quadrant is taken to be closed, meaning that the two boundary rays (the coordinate axes) are included in the corresponding quadrant.}, respectively. 

Suppose that
\[
\bigcap \mathcal{B} = [a,b] \times [c,d].
\]
If there exists a point \(p \in P\) such that \(p \in \bigcap \mathcal{B}\), then the statement follows immediately. 
Hence, we assume that no point of \(P\) lies in \(\bigcap \mathcal{B}\).

    We now begin the reduction from rectangles to \(4\)-intervals. 
Since both \(P\) and \(\mathcal{B}\) are finite, we may assume that \(P\) and all rectangles in \(\mathcal{B}\) are contained in a bounding box $[a',b'] \times [c',d']$ for some \(a',b',c',d' \in \mathbb{R}\) without changing the intersection graph of $\mathcal B|_P$.

For any two points \(p_1=(x_1,y_1)\) and \(p_2=(x_2,y_2)\) of \(P\) lying in the same quadrant, we define
\(p_1 \preceq p_2\) if \(|x_1|\le |x_2|\) and \(|y_1|\le |y_2|\).
This relation defines a partial order on the points of \(P\) within each quadrant. 
Moreover, if \(p_2 \in B\) for some rectangle \(B\) and \(p_1 \preceq p_2\), then necessarily \(p_1 \in B\).
Consequently, it suffices to consider points of \(P\) that are minimal with respect to this order in their quadrant.
That is, we may assume that for any two points \(p_1,p_2 \in P\) lying in the same quadrant, neither \(p_1 \preceq p_2\) nor \(p_2 \preceq p_1\) holds; such points are said to be \emph{incomparable}.

Fix a quadrant and order the points of \(P\) assigned to this quadrant by increasing absolute value of their \(x\)-coordinates.
Let these points be \(p_1,\dots,p_k\).
Choose an auxiliary point \(p_0\) on the boundary ray of the \(y\)-axis belonging to this quadrant, outside the bounding box \([a',b']\times[c',d']\), and choose an auxiliary point \(p_{k+1}\) on the boundary ray of the \(x\)-axis belonging to this quadrant, also outside the bounding box. These auxiliary points are chosen so that the polygonal chain connecting
\[
p_0,p_1,\dots,p_k,p_{k+1}
\]
is monotone in the \(x\)-coordinate and in the absolute value of the \(y\)-coordinate. Let \(L\) denote this polygonal chain. Since every rectangle in \(\mathcal B\) is contained in \([a',b']\times[c',d']\), no rectangle in \(\mathcal B\) contains \(p_0\) or \(p_{k+1}\).

\begin{observation}
For any rectangle \(B \in \mathcal{B}\), if two points \((x_1,y_1),(x_2,y_2) \in B \cap L\) with \(x_1 < x_2\), then every point \((x_3,y_3) \in L\) satisfying \(x_1 \le x_3 \le x_2\) also belongs to \(B\).
\end{observation}

\begin{figure}[!h]
    \centering
    
    \includegraphics[scale=0.5]{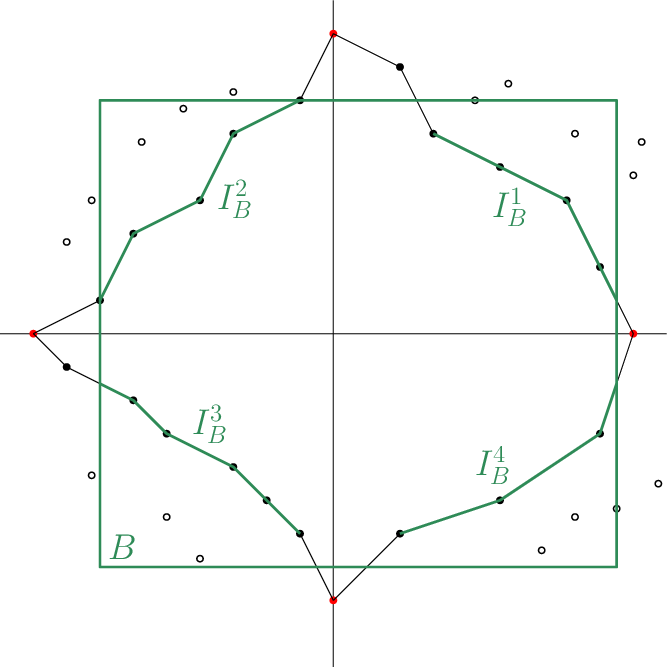}
    
    \caption{Illustration of reduction}
    \label{fi2}
\end{figure}

    Let \(L_1,\dots,L_4\) be the four polygonal chains corresponding to the four quadrants constructed above. By choosing the auxiliary endpoints sufficiently far from the bounding box and consistently on adjacent axes, we may arrange that
\[
\Pi_y(L_1)=\Pi_y(L_2),\quad 
\Pi_x(L_2)=\Pi_x(L_3),\quad 
\Pi_y(L_3)=\Pi_y(L_4),\quad 
\Pi_x(L_4)=\Pi_x(L_1),
\]
where \(\Pi_x\) and \(\Pi_y\) denote the projections onto the \(x\)- and \(y\)-axes, respectively. The auxiliary endpoints of these chains lie outside the bounding box and hence are contained in no rectangle of \(\mathcal B\).

For each rectangle \(B= [x_1,x_2] \times [y_1,y_2] \in \mathcal{B}\), suppose that \(B\) contains points
\[
p_{i_1^j}, \dots, p_{i_{k_j}^j}
\]
of \(P\) in the \(j\)-th quadrant, for \(j \in [4]\), where
\(i_1^j \le \dots \le i_{k_j}^j\). Let $q_1, q_4$ (respectively, $q_2, q_3$) be the intersection points of the line $\{(x,y) \in \mathbb{R}^2 : x = x_2\}$ (respectively, $\{(x,y) \in \mathbb{R}^2 : x = x_1\}$) with the polygonal chains $L_1, L_4$ (respectively, $L_2, L_3$). Let \(I_B\) be the union of the four polygonal chains \(I_B^j \subseteq L_j\) joining the points \(\{p_{i_1^j}, \dots, p_{i_{k_j}^j}, q_j\}\) in increasing order of their \(x\)-coordinates\footnote{If \(B\) contains no point of \(P\) in the \(j\)-th quadrant, we define \(I_B^j = \emptyset\).}.
Define $\mathcal{I} \coloneqq \{ I_B : B \in \mathcal{B} \}$. Clearly, among every $p$ members of $\mathcal I$, there are $2$ having a point in common.

\begin{claim}
$\tau(\mathcal{B}|_P) \le \tau(\mathcal{I}|_P) = \tau(\mathcal{I})$.
\end{claim}

\begin{proof}
Since \(I_B \subseteq B\) for every \(I_B \in \mathcal I\), any piercing set for \(\mathcal I|_P\) is also a piercing set for \(\mathcal B|_P\). Hence,
\[
\tau(\mathcal{B}|_P) \le \tau(\mathcal{I}|_P).
\]

We now prove \(\tau(\mathcal{I}|_P) = \tau(\mathcal{I})\).
The inequality \(\tau(\mathcal{I}|_P) \ge \tau(\mathcal{I})\) is immediate, since \(\mathcal{I}|_P\) is a restriction of \(\mathcal I\).
It therefore suffices to show that every piercing set of \(\mathcal I\) can be transformed into a piercing set of \(\mathcal I|_P\) of no larger size.

Let \(S\) be a piercing set of \(\mathcal I\).
For each point \(s \in S\), suppose \(s \in L_j\) for some \(j \in [4]\), and let
\[
\mathcal I_s \coloneqq \{ I \in \mathcal I : s \in I \}.
\]
Then \(s \in \bigcap_{I \in \mathcal I_s} I^j\), so this intersection is nonempty.
By the construction of \(\mathcal I\), at least one of the endpoints of the interval
\(\bigcap_{I \in \mathcal I_s} I^j\) belongs to \(P\). Indeed, if \(j \in \{1,4\}\), then the left endpoint belongs to \(P\); if \(j \in \{2,3\}\), then the right endpoint belongs to \(P\).
Hence, we may replace \(s\) by one of these endpoints, say \(s'\), such that
\[
s' \in \bigcap_{I \in \mathcal I_s} I \cap P.
\]

Repeating this replacement for every \(s \in S\) yields a piercing set
\(S' \subseteq P\) for \(\mathcal I|_P\) with \(|S'| \le |S|\).
Therefore,
\(\tau(\mathcal{I}|_P) \le \tau(\mathcal{I})\), completing the proof.
\end{proof}

Therefore, it suffices to show that \(\tau(\mathcal{I}) \le 4(p-1)\). Assume, for contradiction, that \(\tau(\mathcal{I}) > 4(p-1)\). 
We shall show that \(\nu(\mathcal{I}) \ge p\), contradicting the fact that among every $p$ members of \(\mathcal{I}\), there exist $2$ of them that intersect.

We apply Theorem~\ref{theorem: Komiya's extension} to the polytope
\(Q=\Delta_{p-1} \times \Delta_{p-1}\).
Each point \(q \in Q\) can be written as
\[
q = \bigl((q_1^1, \dots, q_{p}^1),(q_1^2, \dots, q_{p}^2)\bigr),
\]
where \(q_i^j \ge 0\) and \( \sum_{i=1}^p q_{i}^j = 1\) for \(j \in \{1,2\}\). For $q \in Q$, $i\in [p]$ and $j \in [2]$, let $r_q(i,j)= \sum_{k \leq i}q_k^j$.
We associate each \(q \in Q\) with $4(p-1)$ points \(l_{q,i}^n\) for $i \in[p-1]$ and $n\in[4]$ on the polygonal chains \(L_1,\dots,L_4\) as follows.

If $n \in \{1,4\}$ ($n \in \{2,3\}$, respectively), let \(l_{q,i}^n\) be the point on \(L_n\) whose \(x\)-coordinate is
\(|\Pi_x(L_n)|\, r_q(i,1)\) (\( - |\Pi_x(L_n)|\, r_q(i,2)\), respectively), where \(|\Pi_x(L_n)|\) denotes the length of the segment \(\Pi_x(L_n)\).

For each \(n\in[4]\), the points \(l_{q,1}^n,\dots,l_{q,p-1}^n\) split \(L_n\) into \(p\) open parts\footnote{Some of these parts may be empty when the corresponding coordinate of $q$ is zero.}, denoted by
\[
(l_{q,0}^n,l_{q,1}^n),\ (l_{q,1}^n,l_{q,2}^n),\ \dots,\ (l_{q,p-1}^n,l_{q,p}^n),
\]
where \(l_{q,0}^n\) and \(l_{q,p}^n\) denote the two auxiliary endpoints of \(L_n\). We call these the first through \(p\)-th parts, respectively.

\begin{figure}[!h]
    \centering
    
    \includegraphics[scale=0.5]{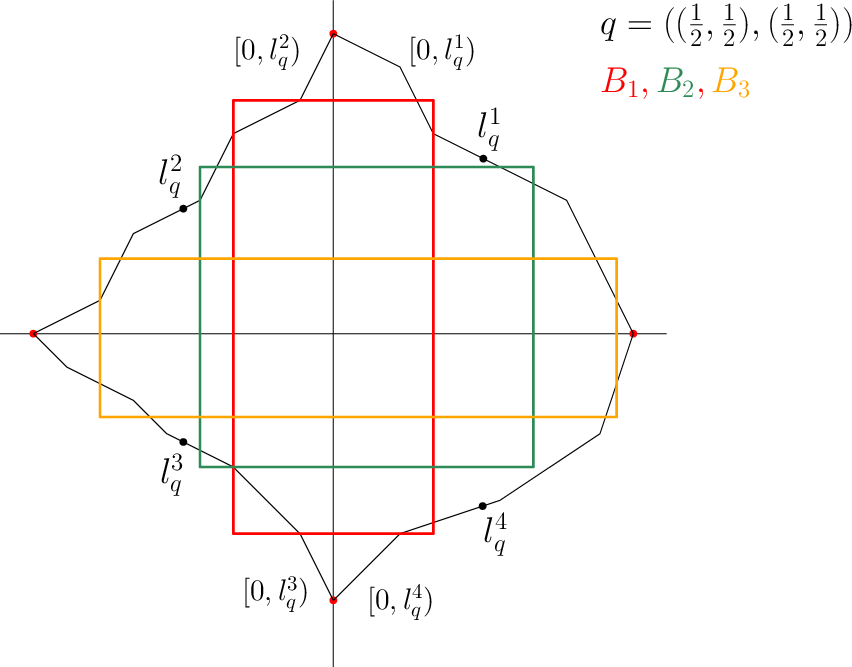}
    
    \caption{Illustration of the topological argument with $p=2$. In the figure, we consider the point 
$q=\bigl((\tfrac12,\tfrac12),(\tfrac12,\tfrac12)\bigr)$. Moreover, we have \(q \in B_{v_1}, B_{v_2}, B_{v_3}\), where $v_1 = ((1,0),(1,0)), 
v_2 = ((0,1),(1,0)), v_3 = ((0,1),(0,1))$,
due to the existence of corresponding sets \(I_{B_1}, I_{B_2}, I_{B_3} \in \mathcal I\), respectively.}
    \label{fi3}
\end{figure}

Let $V \coloneqq \{(i,j) : i \in [p],j \in [2]\}$,
and let \(G\) be the complete bipartite graph with vertex set \(V\) and bipartition
\(V^j = \{(i,j) : i \in [p]\}\).
Each vertex $v=\bigl((v_1^1,\dots, v_p^1),(v_1^2, \dots ,v_p^2)\bigr) \in Q$ satisfies \(v_{i_1}^1=1\) and \(v_{i_2}^2=1\) for some \(i_1,i_2 \in [p]\).
Thus, each vertex \(v \in Q\) can be bijectively associated with the edge $e_v = \{(i_1,1),(i_2,2)\}$ of \(G\).

\begin{remark*}
    Recall that each point $q \in Q$ is associated with $4(p-1)$ points $l^n_{q,i}$, where $i \in [p-1]$ and $n \in [4]$, on the four polygonal chains. 
For any $n \in [4]$, $p-1$ points $l^n_{q,i}$, where $i \in [p-1]$, divides $L_n$ into $p$ parts.
Informally, each vertex $v \in Q$, associated with the edge $e_v = \{(i_1,1),(i_2,2)\}$, is defined to encode a pattern indicating that there exists some $I \in \mathcal I$ such that \(I^1\) and \(I^4\) lie in the \(i_1\)-th parts of \(L_1\) and \(L_4\), respectively, while \(I^2\) and \(I^3\) lie in the \(i_2\)-th parts of \(L_2\) and \(L_3\), respectively.
\end{remark*}

To apply Theorem~\ref{theorem: Komiya's extension}, define \(q(v)=v\) for every vertex \(v \in Q\), and set
\[
q(Q) = \bigl((\tfrac1p, \dots, \tfrac1p),(\tfrac1p, \dots,\tfrac1p)\bigr).
\]
For any other face \(F \subseteq Q\), choose \(q(F) \in F\) arbitrarily, as these choices will not affect the argument.

For a vertex \(v\in Q\) associated with the edge \(\{(i_1,1),(i_2,2)\}\), define \(B_v\) to be the set of all points \(q \in Q\) for which there exists
\(I=\bigcup_{j\in[4]} I^j \in \mathcal{I}\) satisfying
\[
I^1 \subseteq (l_{q,i_1-1}^1,l_{q,i_1}^1),\quad
I^4 \subseteq (l_{q,i_1-1}^4,l_{q,i_1}^4),
\]
and
\[
I^2 \subseteq (l_{q,i_2-1}^2,l_{q,i_2}^2),\quad
I^3 \subseteq (l_{q,i_2-1}^3,l_{q,i_2}^3).
\]

We say that such an \(I\) \emph{escapes} through the pattern defined by \(q\) and \(v\). More specifically, \(I^1\) and \(I^4\) escape through the \(i_1\)-th parts of \(L_1\) and \(L_4\), respectively, while \(I^2\) and \(I^3\) escape through the \(i_2\)-th parts of \(L_2\) and \(L_3\), respectively. Clearly, each \(B_v\) is open, since all \(I^j\) are closed. For all other faces \(F \subseteq Q\), set \(B_F=\emptyset\).

\begin{remark*}
    Informally, a point $q$ determines the positions of “holes” on the polygonal chains $L_1,\dots,L_4$. For any $I \in \mathcal I$ satisfying that $I \cap \{l_{q,i}^n : i\in [p-1],n\in[4]\} =\emptyset$, there are in total $p^4$ possible ways to escape from these holes. The definition of \(B_v\) selects $p^2$ of these possibilities.
Every vertex $v$ specifies a pattern indicating that there exists some $I \in \mathcal I$ such that $I^1,\dots,I^4$ escape from the corresponding holes. 
Accordingly, the set $B_v$ consists of all points $q \in Q$ for which there exists some $I \in \mathcal I$ that escapes the holes defined by $q$ according to the pattern specified by $v$.
\end{remark*}

We now verify the conditions of Theorem~\ref{theorem: Komiya's extension}.
Since \(\tau(\mathcal{I}) > 4(p-1)\), for any $q \in Q$, there exists $I \in \mathcal I$ such that $I \cap \{l_{q,i}^n : i\in [p-1], n \in[4]\} =\emptyset$. By the definitions of \(\mathcal{I}\) and $B_v$, the set $I$ must escape from the holes defined by $q$ and some vertex $v \in Q$. 

Indeed, suppose that $I$ does not escape from the hole in any of the $p^2$ ways specified by the vertices of $Q$. Then either $I^1, I^4 \neq \emptyset$ with $I^1,I^4$ escape from the holes with different indices, or $I^2, I^3 \neq \emptyset$ with $I^2,I^3$ escape from the holes with different indices. Without loss of generality, assume that $I^1, I^4 \neq \emptyset$ with $I^1,I^4$ escape from the holes with different indices, as the remaining cases are analogous. However, this configuration contradicts the definition of $\mathcal I$, which requires that the right endpoints of $I^1$ and $I^4$ have the same $x$-coordinates, whenever both intervals are nonempty.

It follows that for any $q \in Q$, there exists \(v \in Q\) such that \(q \in B_v\).
Hence,
\[
Q \subseteq \bigcup_{v \in \mathrm{vert}(Q)} B_v.
\]
Every face of \(Q\) is of the form
\[
F(M^1,M^2) \coloneqq \{q : q_m^j=0 \text{ for all } m \notin M^j,\ j\in[2]\},
\]
where \(M^j \subseteq [p]\).
Fix a nonempty face \(F(M^1,M^2)\), and let
\[
q=\big((q_1^1,\dots,q_p^1),(q_1^2,\dots,q_p^2)\big)\in F(M^1,M^2).
\]
Choose a vertex \(v=\bigl((v_1^1,\dots,v_p^1),(v_1^2,\dots,v_p^2)\bigr)\) such that \(q\in B_v\), and let \(i_1,i_2\in[p]\) be defined by \(v^1_{i_1}=1\) and \(v^2_{i_2}=1\).

Suppose that \(q_{i_1}^1 > 0\) and \(q_{i_2}^2 > 0\). Then \(i_1 \in M^1\) and \(i_2 \in M^2\). Hence, by the definition of \(F(M^1,M^2)\), we have \(v \in F(M^1,M^2)\).

Next, suppose that \(q_{i_1}^1 = 0\) and \(q_{i_2}^2 > 0\). Then the \(i_1\)-th parts of \(L_1\) and \(L_4\) are empty. Since \(q \in B_v\), by the definition of \(B_v\), there exists an interval \(I \in \mathcal I\) such that \(I^1\) and \(I^4\) escape through the \(i_1\)-th parts of \(L_1\) and \(L_4\). This implies that \(I^1\) and \(I^4\) are empty. Choose $i_1'$ such that $q^1_{i_1'}>0$. Consequently, \(I\) can also escape through the holes specified by the vertex
\[
u = \bigl((u_1^1,\dots,u_p^1),(u_1^2,\dots,u_p^2)\bigr),
\]
where \(u^1_{i_1'} = 1\) and \(u^2_{i_2} = 1\). Hence, \(q \in B_u\). Since \(q^1_{i_1'}  > 0\) and \(q^2_{i_2} > 0\), we have \(i_1' \in M^1\) and \(i_2 \in M^2\). Therefore, by the definition of \(F(M^1,M^2)\), we obtain \(u \in F(M^1,M^2)\).

The case \(q_{i_1}^1 > 0\) and \(q_{i_2}^2 = 0\) is analogous: one can similarly find a vertex \(u\) such that \(q \in B_u\) and \(u \in F(M^1,M^2)\). Finally, suppose that \(q_{i_1}^1 = 0\) and \(q_{i_2}^2 = 0\). Then all of \(I^1,\dots,I^4\) are empty, contradicting the fact that at least one of them is nonempty.

Thus, we conclude that for every \(F = F(M^1,M^2)\) and every \(q \in F\), there exists a vertex \(v\) of \(F\) such that \(q \in B_v\). It follows that
\[
F(M^1,M^2) \subseteq \bigcup \{ B_v : v \in F(M^1,M^2) \},
\]
and the hypotheses of Theorem~\ref{theorem: Komiya's extension} are satisfied.

Therefore, by Theorem~\ref{theorem: Komiya's extension} there exists a set \(T\) of vertices such that
\(q(Q) \in \operatorname{conv}\{q(v_i): v_i \in T\}\) and
\(\bigcap_{v_i \in T} B_{v_i} \neq \emptyset\).
Let \(E=\{e_{v_i} : v_i \in T\}\), where each $e_{v_i}$ is defined above for the complete bipartite graph $G$ on $V$, and consider the bipartite graph \(D=(V,E) \).
Since $q(v)=v$ and 
\[
q(Q)=\bigl((\tfrac1p,\dots,\tfrac1p),(\tfrac1p,\dots,\tfrac1p)\bigr) \in \operatorname{conv}\{q(v_i): v_i \in T\},
\]
there exist coefficients \(t_{v_i} \in [0,1]\) with $\sum_{v_i\in T} t_{v_i}=1$, for \(v_i \in T\), such that
\[
\sum_{v_i \in T} t_{v_i} v_i = \bigl((\tfrac1p,\dots,\tfrac1p),(\tfrac1p,\dots,\tfrac1p)\bigr).
\]
By the definition of \(e_{v_i}\) and the coordinates of the vertices \(v_i\), the function \(f : E \to \mathbb{R}_{\geq 0}\) defined by \(f(e_{v_i}) = pt_{v_i}\) satisfies
\[
\sum_{e \ni u} f(e) = 1 \quad \text{for every } u \in V.
\]
By double counting, we obtain
\[
\sum_{e \in E} f(e)
= \tfrac12 \sum_{e \in E} \sum_{u \in e} f(e)
= \tfrac12 \sum_{u \in V} 1
= p,
\]
and hence \(\nu^\ast(D) \ge p\).

Since \(D\) is bipartite, Kőnig’s theorem yields
\[
\nu(D)=\tau(D)\ge \tau^\ast(D)=\nu^\ast(D)\ge p.
\]
Let \(M\) be a matching in \(D\) with \(|M|\ge p\).
Choose \(q_0 \in \bigcap_{v \in T} B_v\).
By the definition of the sets \(B_v\), for each edge \(e_v \in M\) there exists
\(I_{e_v} \in \mathcal{I}\) that escapes from the holes defined by \(q_0\) and \(v\).
Note that these sets \(I_{e_v}\) are pairwise disjoint, which implies \(\nu(\mathcal{I}) \ge p\).
This contradiction completes the proof.

\end{proof}

\section{Proof of Theorem~\ref{theorem: N(p,2,2)}}\label{section: proof of theorem N(p,2,2)}

\subsection{The upper bound}
Since among every \(p\) rectangles, there exist two whose intersection contains a point of \(P\), we have
\(\nu(\mathcal B) \leq p-1\).
Hence, by Theorem~\ref{theorem: piercing rectangles}, there exists a piercing set for \(\mathcal B\) of size at most
\(O\!\Big((p-1) \big(\log\log(p-1)\big)^2 \Big)\).
This implies that \(\mathcal B\) can be partitioned into at most
\(O\!\Big((p-1) \big(\log\log(p-1)\big)^2 \Big)\) subfamilies, each of which has the property that every two rectangles intersect.

By Theorem~\ref{theorem: N(p,2,2) with every two intersect}, for each such subfamily there exists a subset
\(S \subseteq P\) of size at most \(O(p-1)\) that intersects every rectangle in the subfamily.
Combining these piercing sets over all subfamilies, we obtain
\[
\tau(\mathcal B|_P) = O\!\big( (p\log\log p)^2\big),
\]
which completes the proof.

\subsection{The lower bound}
In this subsection, we prove $N(p,2,2) = \Omega(p \log p)$. The construction is obtained from the result due to Pach and Tardos in~\cite{pach2010coloring}.

We first recall the part of their construction that we need. For integers
\(c_1 \ge 2\) and \(c_2 \ge 1\)\footnote{In~\cite{pach2010coloring}, they use parameters $c$ and $d$.}, Pach and Tardos construct a family $\mathcal R(c_1,c_2)$
of open axis-parallel rectangles with $|\mathcal R(c_1,c_2)|=(c_2+1)c_1^{c_2-1}$, satisfying the following lemma. 

Let \(H^\ast(c_1,c_2)\) be a hypergraph on vertex set
\(\mathcal R(c_1,c_2)\), where a nonempty set $\mathcal S\subseteq \mathcal R(c_1,c_2)$
is a hyperedge if and only if there exists a point of the plane covered by
exactly the rectangles in \(\mathcal S\), and by no other rectangle of
\(\mathcal R(c_1,c_2)\).

\begin{lemma}[Theorem~3 in \cite{pach2010coloring}]\label{lemma: lemma due to Pach and Tardos}
    Let $c_2 \geq 1$, $2\leq r<c_1$, and let $H^\ast=H^\ast(c_1,c_2)$ be the hypergraph defined as above. If a subset $I \subseteq \mathcal R(c_1,c_2)$ contains no hyperedge of $H^\ast$ of size $r$, then we have \[ |I| \leq \frac{c_1^{c_2-1}}{\frac{1}{r-1}-\frac{1}{c_1-1}}. \]
\end{lemma}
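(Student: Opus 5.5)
Since this lemma is quoted verbatim from Pach and Tardos, the most economical route is to invoke Theorem~3 of~\cite{pach2010coloring} directly. If I had to reprove it, I would reconstruct their recursive construction of $\mathcal R(c_1,c_2)$ and argue by induction on $c_2$, with a weighting (discharging) argument at each level.

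\textbf{The construction.} I would present $\mathcal R(c_1,c_2)$ as $c_2+1$ \emph{layers}, each of size $c_1^{c_2-1}$, indexed by words $w\in[c_1]^{c_2-1}$. The structural facts I expect to need are these.
\begin{enumerate}
\item The layers alternate between ``horizontal-type'' and ``vertical-type'' rectangles.
\item Each rectangle of layer $k$ is cut by $c_1$ rectangles of layer $k+1$, indexed by the children $w\,a$ of its word, in a nested, staircase fashion.
\item Consequently, for every rectangle $R$ and every set of $r$ consecutive children of $R$ (in the natural order along $R$), there is a point covered exactly by those children, by $R$, and by nothing else. The same should hold for ``chains'' running along one branch of the tree.
\end{enumerate}
So the hyperedges of size $r$ available in $H^\ast$ are, roughly, sets of the form ``a parent together with $r-1$ consecutive children'' and ``$r$ consecutive children'' along the layered tree structure. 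Verifying exactly which such sets appear is the step I expect to be the main obstacle, because the lemma's bound depends sharply on having the right family of size-$r$ hyperedges.

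\textbf{The counting.} Assume the structure above, and let $I$ contain no size-$r$ hyperedge. Fix a parent $R$ and look at its $c_1$ ordered children.
\begin{itemize}
\item Among any $r-1$ consecutive children, if $R\in I$ then at most $r-2$ of them lie in $I$.
\item If $R\notin I$, then among any $r$ consecutive children at most $r-1$ lie in $I$.
\end{itemize}
Turning this into a linear inequality, I would aim to show that for each parent, the number of children in $I$ plus $\lambda\cdot\mathbf 1[R\in I]$ is at most $c_1\,\frac{r-2}{r-1}+\text{(boundary term)}$, for a suitable multiplier $\lambda$. The boundary effect of the window argument on a path of $c_1$ children is what produces the $\frac1{c_1-1}$ correction. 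Summing these inequalities over all parents in all layers, with weights chosen so that every rectangle in the middle layers is counted with total weight $1$, should telescope. I would then choose $\lambda$ to balance the top layer (size $c_1^{c_2-1}$, at most fully in $I$) against the gain accumulated over the layers, aiming for
\[
|I|\Bigl(\tfrac1{r-1}-\tfrac1{c_1-1}\Bigr)\le c_1^{c_2-1}.
\]

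\textbf{Verification.} Finally I would check that the bound is consistent with $|\mathcal R(c_1,c_2)|=(c_2+1)c_1^{c_2-1}$. It forces the fraction of $I$ to be $O\bigl(1/c_2\bigr)$ once $r$ is small compared to $c_1$, and this decay in $c_2$ is exactly what drives the $\Omega(p\log p)$ lower bound later. The main difficulty, as noted, is the precise identification of the size-$r$ hyperedges, including the chains that mix layers, and getting the boundary term in the window argument exactly right. In practice I would simply cite~\cite{pach2010coloring} for this step.
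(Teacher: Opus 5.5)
Your proposal matches the paper, which gives no argument of its own for this lemma and simply imports Theorem~3 of \cite{pach2010coloring}; citing it directly is the complete justification here. Your reconstruction sketch is not a proof on its own, though. Its layer structure is inconsistent as stated: children $wa$ of words $w\in[c_1]^{c_2-1}$ are not themselves words in $[c_1]^{c_2-1}$, and a genuine $c_1$-ary tree cannot have $c_2+1$ layers each of size $c_1^{c_2-1}$. In addition, the size-$r$ hyperedges and the weighted window inequality are guessed rather than derived, so the citation is what actually carries the lemma.
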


Let $G^\ast(c_1,c_2)=H^\ast_2(c_1,c_2)$ be the graph consisting of all two-element hyperedges of \(H^\ast(c_1,c_2)\). Thus
two rectangles \(R,R'\in\mathcal R(c_1,c_2)\) are adjacent in \(G^\ast(c_1,c_2)\) if and
only if there exists a point covered by exactly \(R\) and \(R'\), and by no other
rectangle of \(\mathcal R(c_1,c_2)\).

\begin{observation}\label{observation: independent number of}
    For every integer \(c_1\ge 3\) and every \(c_2\ge 1\), the independence number of $G^\ast(c_1,c_2)$ satisfies 
\[
\alpha(G^\ast(c_1,c_2))
\le
\frac{c_1-1}{c_1-2}\,c_1^{c_2-1}.
\]
In particular,
\[
\alpha(G^\ast(3,c_2))\le 2\cdot 3^{c_2-1}.
\]
\end{observation}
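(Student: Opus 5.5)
An independent set of $G^\ast(c_1,c_2)$ is exactly a subset of $\mathcal R(c_1,c_2)$ containing no hyperedge of $H^\ast(c_1,c_2)$ of size $2$. This is because the edges of $G^\ast$ are precisely the two-element hyperedges of $H^\ast$. So the plan is to apply Lemma~\ref{lemma: lemma due to Pach and Tardos} with $r=2$ and simplify the resulting bound.

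Concretely, let $I$ be an independent set in $G^\ast(c_1,c_2)$. If $I$ contained a hyperedge $\mathcal S$ of $H^\ast$ with $|\mathcal S|=2$, then $\mathcal S$ would be an edge of $G^\ast$ with both endpoints in $I$, which is impossible. The hypothesis $2\le r<c_1$ with $r=2$ needs $c_1\ge 3$, which is exactly the assumption of the observation, and $c_2\ge 1$ is also given. Lemma~\ref{lemma: lemma due to Pach and Tardos} therefore yields
\[
|I|\le \frac{c_1^{c_2-1}}{1-\frac{1}{c_1-1}} = \frac{c_1-1}{c_1-2}\,c_1^{c_2-1}.
\]
Taking $I$ to be a maximum independent set gives the bound on $\alpha(G^\ast(c_1,c_2))$.

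For the special case, I substitute $c_1=3$, which gives $\frac{2}{1}\cdot 3^{c_2-1}=2\cdot 3^{c_2-1}$. There is no real obstacle here. The only point that needs care is the correspondence between edges of $G^\ast$ and size-$2$ hyperedges of $H^\ast$, and this is immediate from the definition of $G^\ast$.
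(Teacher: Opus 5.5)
Your proposal is correct and matches the paper's proof: both note that an independent set of $G^\ast(c_1,c_2)$ contains no two-element hyperedge of $H^\ast(c_1,c_2)$, and then apply Lemma~\ref{lemma: lemma due to Pach and Tardos} with $r=2$. You also spell out two steps the paper leaves implicit: that $2<c_1$ holds because $c_1\ge 3$, and the simplification $\bigl(1-\frac{1}{c_1-1}\bigr)^{-1}=\frac{c_1-1}{c_1-2}$.
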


\begin{proof}
Let \(I\subseteq \mathcal R(c_1,c_2)\) be an independent set in \(G^\ast(c_1,c_2)\).
Since the edges of \(G^\ast(c_1,c_2)\) are precisely the two-element hyperedges of
\(H^\ast(c_1,c_2)\), the set \(I\) contains no hyperedge of \(H^\ast(c_1,c_2)\) of size
\(2\). Then the observation follows from Lemma~\ref{lemma: lemma due to Pach and Tardos}.
\end{proof}

We now prove the following theorem, which implies the desired lower bound.

\begin{theorem}
\label{theorem: lower bound}
For every integer $p\geq 3$, there exist a finite family $\mathcal{R}$ of axis-parallel rectangles in $\mathbb{R}^2$ and a set $P \subseteq \mathbb{R}^2$ such that the trace $\mathcal{R}|_P$ has the $(p,2)$-property, $\emptyset \notin \mathcal{R}|_P$, and
\[
\tau(\mathcal{R}|_P) > \frac{p-1}{12} \log_3 \frac{p-1}{6}.
\]
\end{theorem}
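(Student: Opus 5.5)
We will build the example directly from the Pach--Tardos family with $c_1=3$. Take $\mathcal R=\mathcal R(3,c_2)$, closing the rectangles slightly (or shrinking them) so that the combinatorics of cells is preserved. For every edge $\{R,R'\}$ of $G^\ast(3,c_2)$, choose a point $x_{RR'}$ covered by exactly $R$ and $R'$; for every rectangle $R$, choose one more point covered by $R$ alone or by $R$ together with others. This second choice must not create new pairwise point-intersections outside $G^\ast$. Rather than a generic point, the natural choice is a point of a cell whose covering set is a hyperedge containing $R$. That cell may be covered by more rectangles, though, which would add unwanted pairwise intersections. So it is cleaner to let $P$ consist only of the points $x_{RR'}$, and to first check that every rectangle of $\mathcal R(3,c_2)$ has a $G^\ast$-neighbour, so that $\emptyset\notin\mathcal R|_P$. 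If some rectangles are isolated, we discard them; since isolated vertices lie in every maximal independent set, the bound on $\alpha$ shows there are few of them.

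With $P=\{x_{RR'}\}$, two traces $R\cap P$ and $R'\cap P$ meet exactly when $RR'$ is an edge of $G^\ast$. Hence a subfamily has no two members sharing a point of $P$ precisely when it is an independent set of $G^\ast$. By Observation~\ref{observation: independent number of}, $\alpha(G^\ast(3,c_2))\le 2\cdot 3^{c_2-1}$, so $\mathcal R|_P$ has the $(p,2)$-property as soon as $p-1\ge 2\cdot 3^{c_2-1}$. For the piercing number, each point of $P$ lies in exactly two rectangles, so any piercing set has size at least $|\mathcal R|/2=(c_2+1)3^{c_2-1}/2$. (If isolated rectangles were removed, their number is at most $\alpha$, which costs only a constant factor.)

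It remains to choose $c_2$. Given $p\ge 3$, we take $c_2$ maximal with $2\cdot 3^{c_2-1}\le p-1$, that is, $3^{c_2-1}\le (p-1)/2<3^{c_2}$. Then $3^{c_2-1}>(p-1)/6$, and $c_2+1>\log_3\frac{p-1}{2}\ge\log_3\frac{p-1}{6}$. This gives
\[
\tau\ge\frac{(c_2+1)3^{c_2-1}}{2}>\frac{p-1}{12}\log_3\frac{p-1}{6}.
\]
The slack in the constant $1/12$ is what absorbs the losses from isolated rectangles, if any.

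The main obstacle is the nonemptiness of traces, which amounts to verifying that $G^\ast(3,c_2)$ has no isolated vertices, or controlling how many it has. I would first try to argue directly from the Pach--Tardos construction that each rectangle has a boundary region shared with exactly one other rectangle. Failing that, I would delete the isolated rectangles and rerun the counting with $|\mathcal R|-\alpha$ in place of $|\mathcal R|$, adjusting the choice of $c_2$ if needed.
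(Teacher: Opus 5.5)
Your proposal is correct and follows essentially the paper's proof: the Pach--Tardos family with $c_1=3$, the point set $P=\{x_{RR'}\}$, deletion of the isolated rectangles (at most $\alpha(G^\ast(3,c_2))\le 2\cdot 3^{c_2-1}$ of them), $\nu(\mathcal R'|_P)\le\alpha$, $\tau\ge|\mathcal R'|/2$, and the same choice $3^{c_2-1}\le\frac{p-1}{2}<3^{c_2}$. The step you left hedged goes through without changing $c_2$: after deletion $|\mathcal R'|\ge(c_2-1)3^{c_2-1}$, and $\frac{p-1}{2}<3^{c_2}$ gives both $\frac{3^{c_2-1}}{2}>\frac{p-1}{12}$ and $c_2-1>\log_3\frac{p-1}{6}$ (the slack is in the argument of the logarithm, not in the constant $\frac1{12}$), while for $c_2=1$ the right-hand side is negative.
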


\begin{proof}
We use the Pach--Tardos construction with \(c_1=3\). Put
\[
\mathcal R_{c_2}=\mathcal R(3,c_2),
\qquad
G_{c_2}=G^\ast(3,c_2).
\]
Then $|\mathcal R_{c_2}|=(c_2+1)3^{c_2-1}$
and by Observation~\ref{observation: independent number of}, $\alpha(G_{c_2})\le 2\cdot 3^{c_2-1}$.

Let \(\mathcal R'_{c_2}\subseteq\mathcal R_{c_2}\) be the set of non-isolated vertices
of \(G_{c_2}\), and let $G'_{c_2}=G_{c_2}[\mathcal R'_{c_2}]$ be the induced subgraph on \(\mathcal R'_{c_2}\). Since the isolated vertices of
\(G_{c_2}\) form an independent set, their number is at most $\alpha(G_{c_2})\le 2\cdot 3^{c_2-1}$.

Therefore,
\[
|\mathcal R'_{c_2}|
\ge
|\mathcal R_{c_2}|-2\cdot 3^{c_2-1}
=
(c_2+1)3^{c_2-1}-2\cdot 3^{c_2-1}
=
(c_2-1)3^{c_2-1}.
\]

We now construct a finite point set \(P_{c_2}\). For every edge
\(RR'\in E(G'_{c_2})\), choose one point $x_{RR'}$ that is covered by exactly the two rectangles \(R\) and \(R'\), and by no other rectangle of \(\mathcal R_{c_2}\). Such a point exists by the definition of
\(G^\ast(3,c_2)\). Define $ P_{c_2}=\{x_{RR'}:RR'\in E(G'_{c_2})\}$.

For every \(R\in\mathcal R'_{c_2}\), the trace \(R|_{P_{c_2}}\) is nonempty because
\(R\) is not isolated in \(G_{c_2}\). Moreover, by the definition of $P_{c_2}$, for any
\(R,R'\in\mathcal R'_{c_2}\), we have $R\cap R'\cap P_{c_2}\neq\emptyset$
if and only if $RR'\in E(G'_{c_2})$.

It follows that a subfamily of the trace family \(\mathcal R'_{c_2}|_{P_{c_2}}\) is
pairwise disjoint if and only if the corresponding vertices form an independent
set in \(G'_{c_2}\). Hence
\[
\nu(\mathcal R'_{c_2}|_{P_{c_2}})
=
\alpha(G'_{c_2})
\le
\alpha(G_{c_2})
\le
2\cdot 3^{c_2-1}.
\]

On the other hand, every point of \(P_{c_2}\) is covered by exactly two rectangles of
\(\mathcal R'_{c_2}\). Therefore one point of \(P_{c_2}\) can pierce at most two traces.
Consequently,
\[
\tau(\mathcal R'_{c_2}|_{P_{c_2}})
\ge
\frac{|\mathcal R'_{c_2}|}{2}
\ge
\frac{(c_2-1)3^{c_2-1}}{2}.
\]

Now for any integer $p\geq 3$, there exists $c_2 \geq 1$ such that $2\cdot 3^{c_2-1}+1 \leq p <2\cdot 3^{c_2}+1$. Consider the family $\mathcal R'_{c_2}|_{P_{c_2}}$ constructed above. Since $\nu(\mathcal R'_{c_2}|_{P_{c_2}}) \leq 2\cdot 3^{c_2-1}$, the family $\mathcal R'_{c_2}|_{P_{c_2}}$ has the $(p,2)$-property. Moreover, since $p <2\cdot 3^{c_2}+1$, we have \[\tau(\mathcal R'_{c_2}|_{P_{c_2}}) \geq  \frac{(c_2-1)3^{c_2-1}}{2} > \frac{p-1}{12} \log_3\frac{p-1}{6}, \] which completes the proof.
\end{proof}

\section*{Acknowledgements}
I would like to thank Minki Kim for drawing my attention to the paper~\cite{tomon2023lower}, as well as Alexander Polyanskii, Mikhail Bludov and Rahul Gangopadhyay for fruitful discussions.

\bibliographystyle{alpha}
\bibliography{references}

\end{document}